\DeclareMathOperator{\Var}{Var}
\newcommand{\bE}{\ensuremath{\mathbb{E}}}
\newcommand{\bN}{\ensuremath{\mathbb{N}}}
\newcommand{\bP}{\ensuremath{\mathbb{P}}}
\newcommand{\bR}{\ensuremath{\mathbb{R}}}
\newcommand{\bZ}{\ensuremath{\mathbb{Z}}}
\newcommand{\cA}{\ensuremath{\mathcal{A}}}
\newcommand{\cC}{\ensuremath{\mathcal{C}}}
\newcommand{\cF}{\ensuremath{\mathcal{F}}}
\newcommand{\cG}{\ensuremath{\mathcal{G}}}
\newcommand{\cH}{\ensuremath{\mathcal{H}}}
\newcommand{\cR}{\ensuremath{\mathcal{R}}}
\newcommand{\ddx}[1][1]{\ifnum#1=1 \frac{d}{dx} \else \frac{d^{#1}}{dx^{#1}} \fi}
\newcommand{\ddy}[1][1]{\ifnum#1=1 \frac{d}{dy} \else \frac{d^{#1}}{dy^{#1}} \fi}
\newcommand{\ddt}[1][1]{\ifnum#1=1 \frac{d}{dt} \else \frac{d^{#1}}{dt^{#1}} \fi}
\newcommand{\bZd}{\mathbb{Z}^d}
\newenvironment{remark}[1][Remark]{\begin{trivlist}
\item[\hskip \labelsep {\bfseries #1}]}{\end{trivlist}}
\theoremstyle{plain}% default
\newtheorem{theorem}{Theorem}[section]  %adding the word section counts as 'Theorem 1.6.' (otherwise just 'Theorem 6.')
\newtheorem{proposition}[theorem]{Proposition}
\newtheorem{lemma}[theorem]{Lemma}
\theoremstyle{definition}
\theoremstyle{remark}
\numberwithin{equation}{section}
\title{Limit laws for random walks in a dynamic path-cone mixing random environment}
\author{ %\renewcommand{\thefootnote}{\arabic{footnote}}
Stein Andreas Bethuelsen% \footnotemark[1]
 \footnote{University of Bergen, Department of Mathematics, Bergen, Norway 
 \newline
 Email: stein.bethuelsen@uib.de}  
 \quad
 Florian V\"ollering   %\footnotemark[2]}
 \footnote{University of Leipzig, Germany
  \newline
 Email: florian.voellering@math.uni-leipzig.de}  
}
\begin{document}

\maketitle

\abstract{We study the asymptotic behaviour of a random walk whose evolution is dependent on the state of an itself dynamically evolving environment. 
Assuming that the environment decorrelates with time by satisfying the "path-cone"-mixing property introduced in \cite[Bethuelsen and V\"ollering]{BethuelsenVolleringRWDRE2016},  we  prove a strong law of large numbers  and large deviation estimates.  Moreover, under a mild assumption on the decay rate of this mixing property, we obtain a functional central limit theorem under the annealed law.}

%%%%%%%%%%%%%%%%%%%%%%%%%%%%%%%%%%%%%

\tableofcontents

%%%%%%%%%%%%%%%%%%%%%%%%%%%%%%%%%%%%%

\section{Introduction, the model and main results}

%%%%%%%%%%%%%%%%%%%%%%%%%%%%%%%%%%%%%

\subsection{Introduction}
Random walks evolving in a random environment have been studied extensively since its origin in \cite{SolomonRWRE1975}. This model on $\mathbb{Z}$ is by now well understood and known to exhibit phenomena that fundamentally distinguishes its behaviour from that of a standard simple random walk.
The same model on higher dimensional lattices has also attracted much interest, but here less is known and several fundamental questions remain open. See for instance \cite{DrewitzRamirezRWRE2013} for an overview.

In recent years, driven by applications in physics and biology, random walks moving in a dynamically evolving environment have gained increasing interest. 
For such models, qualitatively much is by now known under various types of \emph{fast mixing} assumptions on the environment.  In these cases, the environment de-correlates sufficiently fast for the increments of the  random walk to be approximately independent on large time scales. Hence, its behaviour resembles that of a simple random walk.  On the contrary, heuristics and simulation studies indicate that it may have anomalous behaviour when the environment is \emph{slowly mixing} \cite{AvenaThomannRWDREsimulations2012}. In this paper, we focus on the fast-mixing regime and add to the literature new results about the limiting behaviour for a general class of models.

The \emph{cone-mixing} condition introduced in \cite{CometsZeitouniLLNforRWME2004,ZeitouniRWRE2004} for static environments and adapted in  \cite{AvenaHollanderRedigRWDRELLN2011} to the dynamic setting is one characterisation of fast-mixing. 
In  \cite{BethuelsenVolleringRWDRE2016}  we introduced what we in this paper name the \emph{path-cone} mixing condition, which is a weakening of the cone-mixing condition. Therein we proved (among others) the strong law of large numbers for the position of the random walk. The purpose of the current paper is to present improvements to this work,  particularly by deriving  large deviation bounds (see Theorem \ref{thm:RWDRE}) and the scaling to a Brownian motion (see Theorem \ref{thm:RWDRE2}).

%%%%%%%%%%%%%%%%%%%%%%%%%%%%%%%%%%%%%

%\subsubsection*{Organization of the paper.}

In the following subsection we provide a precise definition of the `random walk in dynamic  random environment' model. Our main results on the asymptotic behaviour of the random walk  are then  presented in Subsection \ref{sec:MR}. The remaining sections summarize the proofs of the main results.  

%%%%%%%%%%%%%%%%%%%%%%%%%%%%%%%%%%%%%

\subsection{The model}

We now introduce in general terms the model of a random walk in a dynamic random environment (RWDRE), partly following the same notation and conventions as in \cite{BethuelsenVolleringRWDRE2016}.  As our \emph{environment} we consider a (random) configuration $\omega \in \Omega$ where $\Omega := E^{\mathbb{Z}^{d}\times\bZ}$ for some finite set $E$ and $d\in \bN$, and we denote by $\bP \in  \mathcal{M}_1(\Omega)$ its law. Here, $\mathcal{M}_1(\Omega)$ denotes the set of probability measures on $(\Omega, \mathcal{F})$ where $\mathcal{F}$ is the standard product $\sigma$-algebra generated by the cylinder events. 
We assume throughout that $\bP$ is measure preserving with respect to translations, that is, for any $(x,t)\in \mathbb{Z}^{d}\times \bZ$,
\begin{align}\label{eq translation invariant}
\bP(\cdot) = \bP(\theta_{x,t} \cdot ),
\end{align}
 where $\theta_{x,t}$ denotes the shift operator $\theta_{x,t} \omega(y,s) = \omega(y+x,s+t)$. 
% \color{blue}
What we have in mind is that $\bP$ is the path measure of a stochastic process whose state space is $E^{\bZ^d}$, but note that our setup is more general than this.
%\color{black}

The \emph{random walk} $(X_t)$ is a discrete-time stochastic process on $\bZ^d$.  We assume that its transition probabilities depend on the state of the environment within a finite region $\Delta \subset \bZd$ around its current location. To be more precise, enumerate the set $\Omega_0:=E^{\Delta}= \{\omega_i\}_{i=1,\dots, K}$ with $K=|E|^{|\Delta|}$. For each $i\in \{1,\dots, K\}$, we assume given a certain prescribed probability distribution on $\bZd$, denoted by $\alpha(i,\cdot)$. 
Then, given $\omega \in \Omega$ and denoting by $o\in \mathbb{Z}^{d}$ the origin, the evolution of $(X_t)$ is such that $P_{\omega}(X_0 = o) =1$ and, for $t\geq 0$,
\begin{align}\label{eq:defRW}
P_{\omega}(X_{t+1} = y+z \mid X_t = y) = \alpha(i, z),\quad  \text{ if } \theta_{y,t} \omega = \omega_i \text{ on } \Delta.
\end{align}
We denote by $\cR := \{ z \in \bZd \colon \alpha(i,z)>0 \text{ for some }i\}$ its range and assume throughout that this is a finite set. 

The law of the random walk  when we have conditioned on the entire environment, $P_{\omega} \in \mathcal{M}_1((\bZ^d )^{\bZ_{\geq0}})$, is called the \emph{quenched} law. We denote its corresponding $\sigma$-algebra by $\mathcal{G}$. 
For $\mathbb{P} \in \mathcal{M}_1(\Omega)$, we denote by $P_{\bP}  \in \mathcal{M}_1 \left( \Omega \times (\bZ^d )^{\bZ_{\geq0}}\right)$ the joint law of $(\omega,X)$, that is,
\begin{equation}
P_{\bP}(B \times A) = \int_{B}P_{\omega}(A) d\bP(\omega), \quad B \in \mathcal{F}, A \in \mathcal{G}.
\end{equation}
 The marginal law of $P_{\bP}$ on $(\bZ^d)^{\bZ_{\geq0}}$ is the \emph{annealed} law of $(X_t)$.

 %%%%%%%%%%%%%%%%%%%%%%%%%%%%%%%%%%%%%

\subsection{Main results}\label{sec:MR}
Before presenting our main results about the asymptotic behaviour of $(X_t)$, we first introduce the necessary notation and recall the "path-cone"-mixing property introduced in \cite{BethuelsenVolleringRWDRE2016}. For this let, for $t \in \bN$, 
\begin{align}
\cR_t :=\{ x \in \bZd \colon x= \sum_{i=1}^t y_i, y_i \in \cR\}
\end{align} be all points in $\bZd$ that $X_t$ in principle can attain. Further, let 
\begin{align}\label{eq:coneDT}
\cC(l) := \{ (x,t) \in \bZd \times \bZ \colon x \in \cR_{t} + \Delta  \text{ and } t \geq l\}
\end{align} and denote by $\cF_{\cC(l)}$ the sub-$\sigma$-algebra of $\cF$ generated by the cylinders in $\cC(l)$. 
Then, the cone-mixing property 
%\color{blue}
of \cite[Definition 1.1]{AvenaHollanderRedigRWDRELLN2011} 
%\color{black}
alluded to in the introduction translates into requiring that
\begin{equation}\label{eq:conemixing}
\sup_{B \in \cF_{\cC(t)}} \sup_{A \in \cF_{\leq 0}} | \bP(B \mid A) - \bP(B)| \rightarrow 0 \text{ as } t \rightarrow \infty,
\end{equation}
where $\cF_{\leq0}$ is the sub-$\sigma$-algebra of $\cF$ generated by the cylinders in $\bZd\times \{\dots,-2,-1,0\}$. 
\begin{remark}
%\color{blue}
Note that, since the model of  \cite{AvenaHollanderRedigRWDRELLN2011} evolves in continuous-time, they considered cylinders on $\bZ^d \times [0,\infty)$ with inclination $\theta$, requiring the equivalent of \eqref{eq:conemixing} to hold for all $\theta\in (0,\pi/2)$. Since our process $(X_t)$ evolves in discrete-time, it cannot exit $\cC(0)$ and therefore it suffices in our case to consider the cones defined as in \eqref{eq:coneDT}.
\end{remark}

The path-cone mixing property is the weakening of \eqref{eq:conemixing}  by restricting the conditioning to events along possible random walk paths only. To make this more precise, denote by 
 \begin{align}%\label{eq Gamma k}
 &\Gamma_{-k}:= \left\{(\gamma_{-k},\gamma_{-k+1},...,\gamma_0) \colon \gamma_i \in \bZ^d,  \gamma_i-\gamma_{i-1}\in \cR , -k\leq i<0, \gamma_0=o\right\};
 \\ \label{eq Gamma k} &\Gamma_{-\infty} := \cup_{k \in \bN} \Gamma_{-k} ,
 \end{align}
 i.e.\ the set of all possible backwards random walk trajectories from $(o,0)$ of finite length.  Given $\gamma \in \Gamma_{-\infty}$ and 
$\sigma= (\sigma_1,\dots,\sigma_{|\gamma|}) \in \Omega_0^{|\gamma|}$, let
\begin{equation}\label{eq Akm}
A(\gamma, \sigma) := \bigcap_{i=-{|\gamma|}}^{-1} \left\{ \theta_{\gamma_i,i}\omega= \sigma_{-i} \text{ on } \Delta \right\}
\end{equation}
denote a particular  observation of a random walk along the path $\gamma$ and 
\begin{align}
\mathcal{A}(\gamma) := \left\{ \bigcap_{i=-{|\gamma|}}^{-1} \left\{ \theta_{\gamma_i,i}\omega= \sigma_{-i} \text{ on } \Delta \right\}
\colon \sigma \in \Omega_0^{|\gamma|} \text{ and } \bP(A(\gamma,\sigma))>0 \right\}
\end{align} 
the set of all possible observations. Moreover, let 
$\mathcal{A}_{-\infty}:=\bigcup_{\gamma \in {\Gamma_{-\infty}}} \mathcal{A}(\gamma)$. We say that $\bP$ is \emph{path-cone mixing} if $\lim_{t \rightarrow \infty} \widetilde{\phi}(t) =0$, where
\begin{equation}\label{eq:pcmixing}
\widetilde{\phi}(t) :=  \sup_{B \in \cF_{\cC(t)}} \sup_{A \in \cA_{-\infty}} | \bP(B \mid A) - \bP(B)|.
\end{equation}
%\color{blue}
See  \cite[Section 2]{BethuelsenVolleringRWDRE2016} for a thorough discussion of distributions $\bP$ satisfying \eqref{eq:pcmixing}. These include the path measure of uniquely ergodic interacting particle systems converging to its stationary distribution sufficiently fast. The condition is, however, not restricted to this class and, particularly in high dimensions, one may expect it to hold for a larger class of interacting particles systems having multiple stationary distributions. 
%\color{black}
 
 In addition to mixing of the dynamic environment, one form of lack of memory is often pivotal in studies of RWDRE models. 
 We say that the RWDRE model is (uniformly) \emph{elliptic} if there is an $\epsilon>0$ such that either of the two following conditions hold: 
\begin{align}
\label{con:ellipticity}
\begin{split}
&a) \: \inf_{A \in \mathcal{A}_{-\infty}} \inf_{\sigma \in \Omega_0} \bP(\omega = \sigma \text{ on } \Delta \mid A ) > \epsilon. 
\\ &b) \: \sup_{z \in \bZd} \inf_{i \in \{1,\dots,K\}} \{\alpha(i,z) \} >\epsilon.
\end{split}
\end{align}

Note that Condition a) concerns the environment only and implies that, irrespectively of its past behaviour, $(X_t)$ may observe any element of $\Omega_0$ with a probability of at least $\epsilon$. 
Condition b) on the other hand is a condition on the jump kernel of $(X_t)$ and does not concern the environment.  

The path-cone mixing property was utilised in \cite{BethuelsenVolleringRWDRE2016} to prove, among others, that $(X_t)$ satisfies the Strong Law of Large Numbers (SLLN), assuming that $(X_t)$ satisfies the ellipticity condition b). 
We next present an extension of this result.

\begin{theorem}\label{thm:RWDRE}
Consider an elliptic and path-cone mixing RWDRE model on $\bZd$. 
\begin{itemize}
\item[i)] %The Strong law of large numbers (SLLN):
There exists a  $v \in \bR^d$ such that
\begin{equation}\label{eq:SLLN} \lim_{t \rightarrow \infty} \frac{X_t}{t} = v \quad P_{\bP}\text{-almost surely} %\color{blue} 
\text{ and in } L^1.%\color{black}
\end{equation}
\item[ii)] %Large deviation bounds (LDBs):
For every $\epsilon >0$ there exists constants $C,c>0$ such that 
\begin{equation}\label{eq:LDBs}
P_{\bP} \left( \left| \frac{X_t}{t} - v \right| > \epsilon \right) \leq Ce^{-ct}, \quad \text{ for all } t \in \bN,
\end{equation}
with $v\in \bR^d$ as in \eqref{eq:SLLN}.
\end{itemize}
\end{theorem}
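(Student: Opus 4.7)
Since the SLLN under ellipticity condition (b) is already established in \cite{BethuelsenVolleringRWDRE2016}, the remaining work is to cover case (a) in part (i) and to prove the exponential large deviation bound (ii). I would address both through a regeneration decomposition specifically tailored to the restricted class $\cA_{-\infty}$ controlled by path-cone mixing.

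The central step is to construct stopping times $0 = \tau_0 < \tau_1 < \dotsb$ along which the random walk approximately renews. A regeneration attempt over a time window $[s,s+\ell]$ of fixed length $\ell$ combines two ingredients. First, an ellipticity event forcing a prescribed pattern on the window, which has probability at least $\epsilon^{\ell}$ conditional on any element of $\cA_{-\infty}$: under (a) we force the environment observed by the walker to equal a reference configuration; under (b) we force the walker itself to take a reference sequence of jumps. Second, a decoupling step using path-cone mixing: since the walker's history at time $s+\ell$ lies in $\cA_{-\infty}$, the conditional law of $\cF_{\cC(s+\ell)}$ differs from the unconditional one by at most $\widetilde{\phi}(\ell)$ in total variation. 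Choosing $\ell$ once and for all (not growing with $t$) so that $\widetilde{\phi}(\ell) < 1/2$, each attempt succeeds with probability bounded below by a constant $p>0$ uniform in the past, so the gaps $\tau_{k+1}-\tau_{k}$ are geometrically dominated and the block increments $X_{\tau_{k+1}} - X_{\tau_{k}}$ can be coupled to an i.i.d. sequence with per-block error of order $\widetilde{\phi}(\ell)$.

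Given such a decomposition, part (i) follows by applying the strong law to the coupled i.i.d. block sums and absorbing the boundary term $X_{t} - X_{\tau_{N(t)}}$, which is controlled by the exponential tail of the last regeneration gap. For part (ii), I would split
\begin{equation*}
P_{\bP}\!\left( \left| \tfrac{X_{t}}{t} - v \right| > \epsilon \right) \,\le\, P_{\bP}\!\left( N(t) \le \delta t \right) + P_{\bP}\!\left( \left| \tfrac{X_{t}}{t} - v \right| > \epsilon,\; N(t) > \delta t \right),
\end{equation*}
where $N(t)$ counts regenerations before time $t$. The first term is exponentially small by the geometric tail of the $\tau_{k}$, while the second is handled by Cram\'er's theorem applied to the bounded coupled block increments, after absorbing the accumulated coupling error (bounded by $N(t)\widetilde{\phi}(\ell)$) via a sufficiently generous upfront choice of $\ell$.

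The main obstacle is the first step: path-cone mixing only controls events $A \in \cA_{-\infty}$, i.e. cylinders along admissible backward trajectories of the walker, whereas the walker's actual history may also depend on the environment off its visited sites. The regeneration event must therefore be expressible purely through the walker's own past trajectory, and verifying that the regeneration probability is bounded below uniformly in the history requires combining ellipticity and path-cone mixing in a single simultaneous conditioning argument rather than applying them sequentially.
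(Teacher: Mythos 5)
Your route is the classical approximate-regeneration scheme of Comets--Zeitouni and Avena--den Hollander--Redig, which is precisely what the paper deliberately avoids: it instead proves a quantitative bound on the uniform mixing coefficient $\widehat{\phi}$ of the local environment process via the $\epsilon$-coin trick (Theorem \ref{thm:main2}), and then deduces the SLLN from asymptotic mean stationarity and ergodicity of the Ces\`aro limit $\nu$, and the LDB from a Schonmann-type blocking argument---no regeneration times at all. A different route would be acceptable, but your sketch has a gap exactly where the new content of the theorem lies, namely ellipticity condition a). Forcing the environment observed by the walker to equal a reference configuration over a window of length $\ell$ does not decouple anything: the conditioning event then contains environment observations up to the \emph{end} of the window, so after recentering at the walker's position at that time the future cone starts at temporal distance $0$ from the conditioned observations, and path-cone mixing only yields the useless bound $\widetilde{\phi}(0)$. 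The correct use of condition a) is the opposite: split off an $\epsilon$-portion of the conditional law under which the observation is \emph{uninformative} (the paper's auxiliary symbol ``?'', under which the walker jumps according to the averaged kernel $\alpha(?,\cdot)$), so that during the window no information about the environment enters the conditioning and the window genuinely separates the past observations from the future cone, producing the error $\widetilde{\phi}(\ell)$. You yourself flag the ``simultaneous conditioning'' issue as the main obstacle but do not resolve it; under condition b) the standard construction (and \cite[Theorem 3.1]{BethuelsenVolleringRWDRE2016}, which identifies the conditional law of the future given the walker's history with $P_{\bP(\cdot\mid A(\gamma,\sigma))}$) handles it, but under condition a)---the case you actually need for part i)---the construction as you describe it fails.

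Two further points. With $\ell$ fixed, the per-block coupling error is the constant $\widetilde{\phi}(\ell)$, so the i.i.d.\ comparison only gives $\limsup_t |X_t/t - v_\ell| \le C\,\widetilde{\phi}(\ell)$ almost surely; to obtain a single limiting velocity $v$ you must still let $\ell \to \infty$ along a sequence and run a Cauchy argument for $v_\ell$, which the proposal omits (the $L^1$ statement, by contrast, is immediate from a.s.\ convergence since $\cR$ is finite and $|X_t|/t$ is bounded). For part ii), an $\epsilon$-dependent choice of $\ell$ together with a Bernoulli domination of coupling failures and Chernoff bounds could plausibly be pushed through once a valid regeneration structure exists, but as written the LDB inherits the unresolved case-a) construction, whereas the paper's proof covers both ellipticity conditions uniformly through the mixing estimate on $\widehat{\phi}$.
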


Theorem \ref{thm:RWDRE}i) extends \cite[Corollary 1.4]{BethuelsenVolleringRWDRE2016} by relaxing the ellipticity assumption for the SLLN 
%\color{blue} 
and to hold in $L^1$. 
%\color{black}
Moreover, the Large Deviation Bounds (LDBs) of  Theorem \ref{thm:RWDRE}ii) are new and extend results obtained in  \cite{AvenaHollanderRedigRWDRELDP2009,BethuelsenHeydenreich2017} 
to environments satisfying far less stringent space-time mixing assumptions.  Note also that in \cite{CamposDrewitzRamirezRassoulSeppalainenQLDPRWDRE2013} a large deviation principle  was established under the quenched law under rather general assumptions on the dynamic environment. Particularly, since the rate function in the quenched setting is upper bounded by annealed large deviation bounds, by Theorem \ref{thm:RWDRE}ii), the corresponding rate function has a unique zero at $x=v$. 

We do not expect the assumptions for the SLLN  to be sharp. In particular, restricted to models on $\bZ$ with nearest neighbour jumps, a SLLN was recently obtained in \cite{BlondelHilarioTeixeiraUniform2020} under a seemingly less restrictive mixing assumption.  Therein they also obtained LDBs, but with a polynomial decay in \eqref{eq:LDBs}. 
%\color{blue}
See also  \cite{Allasia2024} for an extension of the SLLN to models with  finite range jumps. 
%\color{black}

To conclude the annealed functional central limit theorem (aFCLT) we are in need of a slightly stronger ellipticity property. We say that the RWDRE model is (uniformly) \emph{strongly elliptic} if either condition a) above holds or it satisfies condition $b')$ given by: 
%\color{blue}
 \begin{align} b') \: \exists \; \epsilon>0\: \colon \inf_{z \in \cR} \inf_{i \in \{1,\dots,K\}} \{ \alpha(i,z)\} >\epsilon. \end{align}
%\color{black}

\begin{theorem}\label{thm:RWDRE2}
Consider a path-cone mixing and strongly elliptic RWDRE model on $\bZd$. 
\begin{itemize}
\item[i)] If there is $\alpha \in \bR^d$ such that the  variance of $\alpha \cdot X_n$ 
 diverges, then there is a  slowly varying function  $h\colon [0,\infty) \rightarrow \bR$ such that 
\begin{equation}
\left( \frac{\alpha \cdot X_{\lfloor nt\rfloor}- \alpha\cdot v\lfloor nt\rfloor}{\sqrt{nh(n)}} \right)_{t \in [0,1]} \overset{P_{\bP}}{\implies} (B_t)_{t \in [0,1]}
\end{equation}
where $\overset{P_{\bP}}{\implies}$ denotes convergence in distribution under the annealed law $P_{\bP}$ as $n\rightarrow \infty$ with respect to the Skorohod topology and  $(B_t)$ is a standard Brownian motion on $\bR$.

\item[ii)]  If $\lim_{t \rightarrow \infty} \widetilde{\phi}(t)t^{2+\delta}=0$ for some $\delta>0$, then furthermore there is a $\sigma \in (0,\infty)$ such that statement of i) holds with $h(n)= \sigma$.

\item[iii)] If $\lim_{t \rightarrow \infty} \widetilde{\phi}(t)t^{2+\delta}=0$ for some $\delta>0$ and for every  $\alpha \in \bR^d \setminus \{o\}$  the  variance of $\alpha \cdot X_n$  
 diverges, then 
 \begin{equation}
\left( \frac{ X_{\lfloor nt\rfloor}- v\lfloor nt\rfloor}{\sqrt{n}} \right)_{t \in [0,1]} \overset{P_{\bP}}{\implies} (B_t^{\Sigma})_{t \in [0,1]}
\end{equation}
where   $(B_t^{\Sigma})$ is a  Brownian motion on $\bR^d$ with covariance matrix $\Sigma>0$.

\end{itemize}
\end{theorem}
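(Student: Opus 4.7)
The plan is to prove the theorem via a big-block--small-block (Bernstein) decomposition of the walker's displacement, using path-cone mixing to render block increments approximately independent. The quantitative ingredient throughout is the following consequence of \eqref{eq:pcmixing}: if $F$ is a bounded functional measurable with respect to $(\omega,X)$ up to time $t$ and $G$ is a bounded functional of $(\omega,X)$ from time $t+s$ onwards, then (after absorbing the walker's trajectory into an event in $\mathcal{A}_{-\infty}$ via conditioning, and using that the future of $(X,\omega)$ is measurable in the forward cone $\mathcal{C}(t+s)$)
\begin{equation*}
| \mathrm{Cov}_{P_{\bP}}(F,G) | \le 2 \|F\|_\infty \|G\|_\infty \widetilde{\phi}(s).
\end{equation*}
Set $S_n := \alpha \cdot X_n - \alpha \cdot v n$ with $v$ as in Theorem \ref{thm:RWDRE}.

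Step 1 (variance asymptotics, part ii)). Applied to the centred block increments $\Delta_{j,L} := \alpha\cdot(X_{jL}-X_{(j-1)L}) - \alpha\cdot v L$ the covariance bound gives $|\mathrm{Cov}(\Delta_{j,L},\Delta_{k,L})| \le C L^2 \widetilde{\phi}((k-j-1)L)$. Under $\widetilde{\phi}(t) t^{2+\delta}\to 0$ the double sum of these covariances is controlled, yielding $\mathrm{Var}(S_n) = n\sigma^2 + o(n)$, and $\sigma^2>0$ by the divergence hypothesis.

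Step 2 (FCLT, part ii)). Partition $\{0,\dots,n-1\}$ into alternating big blocks of length $L_n$ and small gap blocks of length $G_n$ chosen so that $L_n,G_n\to\infty$, $L_n/n\to 0$ and $L_n^2\widetilde{\phi}(G_n)/G_n \to 0$. The small-block contribution has variance $O(nG_n/L_n)=o(n)$ by Step 1. For the big blocks, iterative coupling using the mixing bound produces independent copies $\widetilde U_j$ with $\|\widetilde U_j - U_j\|_{L^2} \to 0$ fast enough, so Lindeberg's CLT applied to $\sum_j \widetilde U_j$ yields finite-dimensional convergence to $\sigma (B_t)$. Tightness in the Skorohod $J_1$ topology follows from the $L^2$-moment bound $\mathrm{Var}(S_{n+m}-S_n)\le C m$ (itself a consequence of Step 1) together with the boundedness of increments and Billingsley's moment criterion.

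Step 3 (slow variation, part i)). Without a rate on $\widetilde{\phi}$ the covariance sum need not converge, and one must instead show directly that $\sigma^2(n):=\mathrm{Var}(S_n)/n$ is slowly varying. Using translation invariance of $\bP$, the SLLN and LDBs of Theorem \ref{thm:RWDRE} (which make increments essentially centred and uniformly integrable), together with qualitative path-cone mixing applied to pairs of blocks of length $L$ separated by a gap $G(L)\to\infty$, one shows $\mathrm{Var}(S_{n+m})/\mathrm{Var}(S_n) \to 1$ for every fixed $m$. Combined with the super-additivity bound $\mathrm{Var}(S_{n+m}) \le \mathrm{Var}(S_n) + \mathrm{Var}(S_m) + \varepsilon(G)$ coming from the covariance bound, this forces $\sigma^2(n)$ to be slowly varying. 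The blocking scheme of Step 2 then applies with block length $L_n$ so small that the coupling error produced by qualitative mixing is $o(\sqrt{n h(n)})$, concluding the FCLT with normalisation $\sqrt{n h(n)}$.

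Step 4 (part iii)). Apply the Cram\'er--Wold device: for each $\alpha\ne o$ part ii) yields convergence of $(\alpha\cdot X_{\lfloor nt\rfloor} - \alpha\cdot v \lfloor nt\rfloor)/\sqrt n$ to a Brownian motion with variance $\sigma_\alpha^2>0$. Polarisation $\alpha^T \Sigma \beta := \tfrac12(\sigma^2_{\alpha+\beta} - \sigma^2_\alpha - \sigma^2_\beta)$ defines a symmetric matrix $\Sigma$ which is positive definite since $\sigma^2_\alpha>0$ for every $\alpha\ne o$. Joint tightness in $\bR^d$ is immediate from componentwise tightness. The main obstacle is Step 3: without a quantitative mixing rate one cannot expand $\mathrm{Var}(S_n)$ as a summable series of covariances, so ruling out oscillations of $\sigma^2(n)$ and pushing the coupling error below $\sqrt{n h(n)}$ requires delicately calibrated block sizes together with the a priori control from Theorem \ref{thm:RWDRE}.
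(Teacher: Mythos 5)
Your central quantitative ingredient is not justified, and I believe it is false as stated: path-cone mixing \eqref{eq:pcmixing} controls $|\bP(B\mid A)-\bP(B)|$ only for events $B\in\cF_{\cC(s)}$, i.e.\ events of the \emph{environment} in the forward cone, whereas your functional $G$ of $(\omega,X)$ from time $t+s$ onwards is not $\cF_{\cC(s)}$-measurable (after recentring at time $t$): the walker's position at time $t+s$, hence every future increment, depends on the environment at the intermediate times in $[t,t+s)$, which lie outside $\cC(s)$. Coupling the two conditional environments on the cone therefore does not make the two walks agree at the start of the future block, and the claimed bound $|\Cov_{P_{\bP}}(F,G)|\le 2\|F\|_\infty\|G\|_\infty\widetilde{\phi}(s)$ does not follow. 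The correct decoupling coefficient for walk functionals is the local-environment-process coefficient $\widehat{\phi}$ of \eqref{eq:LEPmixing}, and the whole point of Theorem \ref{thm:main2} is that bounding $\widehat{\phi}$ by $\widetilde{\phi}$ costs an exponential change of time scale, obtained through the $\epsilon$-coin trick: one must force a stretch of $n\approx c\log t$ steps during which the walk reveals no information about the environment, and only (strong) ellipticity makes this possible. This is why ellipticity appears in the hypotheses of the theorem and is used twice in the paper, while it never enters your argument --- a telling sign. Concretely, under $\widetilde{\phi}(t)t^{2+\delta}\to0$ one only gets $\widehat{\phi}(t)$ of order $(\log t)^{-(2+\delta)}$, so the absolute summation of block covariances in your Step 1 diverges and the variance asymptotics $\Var(S_n)=n\sigma^2+o(n)$ cannot be obtained this way; the paper instead derives linear variance growth from Bradley's theorem, which needs only $\sum_t\sqrt{\widehat{\phi}(2^t)}<\infty$, a condition compatible with logarithmic decay.

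A second gap is stationarity. Under the annealed law $P_{\bP}$ the increment sequence $(\xi_t^{(2)})$ is not stationary, yet your Steps 1--3 invoke results and heuristics (covariance expansion of $\Var(S_n)$, the Ibragimov-type dichotomy behind slow variation, Peligrad-style CLTs) that are formulated for stationary sequences; in particular the claim in Step 3 that $\Var(S_n)/n$ is slowly varying presupposes exactly the stationary structure you have not constructed. The paper resolves this by first building the stationary Cesaro limit $\nu$ of the local environment process (Proposition \ref{prop:fLEPtRW}i)), proving the FCLT under $\nu$ via Peligrad, Bradley and Cram\'er--Wold, and then transferring the limit theorem back to $P_{\bP}$ by Zweim\"uller's invariance result together with mutual absolute continuity of $\nu$ and $P_{\bP}$ on the trajectory $\sigma$-algebra (Lemma \ref{lem:ac}) --- the second place where strong ellipticity is essential. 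Your proposal contains no counterpart to this transfer step, so even with a corrected mixing coefficient the blocking argument would prove a statement under the wrong measure.
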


Previously the aFCLT has been proven in \cite{comets2005gaussian} for random walks in static random environments (note also the recent improvements \cite{Guerra1,Guerra2}), which was adapted  to the dynamic setting in \cite[Chapter 3]{AvenaPhD}. The mixing assumptions needed therein are more restrictive than cone-mixing and hence, Theorem \ref{thm:RWDRE2} provides an extension of these works. The aFCLT was also obtain in \cite{RedigVolleringRWDRE2013} under a mixing assumption similar to cone-mixing. Lastly, we note that the aFCLT  follows from the methodology developed in  \cite{DolgopyatKellerLiverani2008} for Markovian environments when combined with the results of \cite[Corollary 1.6]{BethuelsenVolleringRWDRE2016}, under the assumption that the environment is path-cone mixing with $ \widetilde{\phi}(t)$ decaying at an exponential rate.

We believe that the assumption  in Theorem  \ref{thm:RWDRE2} that  the variance of $(X_n)$ diverges is redundant in most cases, but refer to Section  \ref{sec:var} for a more thorough discussion of this matter. We also believe that the assumption on $ \widetilde{\phi}(t)$ in statement ii) and iii) can be considerably relaxed. In particularly, one might expect that the local environment process considered in Section \ref{LERW} should  mix  at the same rate as $ \widetilde{\phi}(t)$ in which  
%\color{blue}
 case
% \color{black} 
a decay at order $\log(t)^{-(2+\delta)}$ for some $\delta>0$ would be sufficient. 
%\color{blue}
% \begin{remark}
The main theorems might also hold beyond the modelling setting of this paper. However, the assumptions that $E$ and $\cR$ are finite come natural in the setting of path-cone mixing environments and are used at several places in the proofs. 

Having obtained the aFCLT in Theorem \ref{thm:RWDRE2}, it is also natural to ask if the same statement holds with respect to the quenched law for $\bP$-a.e.\ realization of the dynamic environment. General quenched central limit theorems have been obtained e.g.\ in \cite{DolgopyatKellerLiverani2008,DolgopyatLiverani2009} under more stringent mixing assumptions than path-cone mixing.  We leave the extension of these to our setting  for future research.

%%%%%%%%%%%%%%%%%%%%%%%%%%%%%%%%%%%%%

\section{The local environment process}% and proofs of the main results}
\label{LERW}

Inspired by \cite{DolgopyatLiverani2009}, we study the so-called \emph{local environment process}. 
To introduce this process precisely, we first recall that 
 the so-called \emph{environment process} is the process $(\omega_t^{EP})$   on $\Omega$ 
  given by
   \begin{equation} (\omega_t^{EP})=(\theta_{(X_t,t)}\omega), \quad t \in \bZ_{\geq0}. \end{equation}

This process was the main object of study in our previous work \cite{BethuelsenVolleringRWDRE2016} and contains information about the entire environment as seen from the walker. The \emph{local environment process} restricts to local information about the environment and is the process $(\xi_t)= (\xi_t^{(1)},\xi_t^{(2)})$ on $\Xi=(\Omega_0, \cR)$ 
 given by
\begin{align}\label{def:LEP}
&\xi_t^{(1)}(x) = \omega_t^{EP}(x,0), \quad x\in \Delta;
\quad &\xi_t^{(2)} := X_{t+1}-X_{t}.
\end{align}
Thus, $(\xi_t^{(1)})$ is the projection of the environment process onto $\Delta$, that is, the environment the random walk needs to observe in order to determine its future behaviour, whereas $(\xi_t^{(2)})$ is simply the jump taken by the random walk at time $t$. Particularly, note that $X_n= \sum_{t=1}^n \xi_t^{(2)}$.

 In the following, we write $P_{-k}$ to denote the law of $(\xi_t)$  shifted by time $t=-k$, that is, the law of $(\hat{\xi}_t)_{t \geq -k}$ where 
$\hat{\xi}_t = \xi_{t+k}$, $t \geq -k$. We also denote by $\cH$ the induced $\sigma$-algebra on $\Xi^{\bZ}$. For $i\leq j$, we write $\cH_{[i,j]}$ for the  induced $\sigma$-algebra containing events on $\Xi^{[i,j]}$ and write $\cH_{\geq t}$ for $\cH_{[t,\infty)}$. 

%\color{blue} 
Now, consider a cylinder event $A \in \cH_{[-k,-1]}$. That is, similar to \eqref{eq Akm},
 for some $\gamma \in \Gamma_{-k}$ and $\sigma \in \Omega_0^{|\gamma|}$, we have that $A = \bigcap_{i=-k}^{-1} \left\{ \hat{\xi}_i^{(1)} =\gamma_{-i},  \hat{\xi}_i^{(2)} =\sigma_{-i}  \right\}$.  
Then it follow by  \cite[Theorem 3.1]{BethuelsenVolleringRWDRE2016} (see particularly the 
last line of the proof) that  $P_{-k}(\cdot | A)$ equals in law to the process $(\xi_t)$ with initial distribution given by $P_{\bP(\cdot | A(\gamma,\sigma))}$. 
With this in mind, for $t \in \bZ_{\geq 0}$, consider the uniform mixing quantity 
\begin{align}\label{eq:LEPmixing}
\widehat{ \phi}(t) = \sup_{A_0,A_1 \in \cA} \sup_{B \in \cH_{\geq t}} | P_{\bP(\cdot \mid A_0)}(B) - P_{\bP(\cdot \mid A_1)}(B)|.
 \end{align}

 In Section \ref{proofs} we provide fairly general arguments on how mixing properties of the local environment process imply asymptotic properties of the random walk $(X_t)$. We summarize our findings in the following proposition.

\begin{proposition}\label{prop:fLEPtRW}
Consider an elliptic RWDRE model on $\bZd$ satisfying  $\lim_{t \rightarrow \infty} \widehat{\phi}(t) = 0$. %Then, 
%\begin{enumerate}
%\item[i)] 
\begin{enumerate}
\item[i)] The law $\frac{1}{k} \sum_{i=1}^{k} P_{-(i)}( \cdot )$ converges weakly to some law $\nu$ on $(\Xi^{\bZ},\cH)$ which is trivial on the tail $\sigma$-algebra $\cH_{\infty} \coloneqq \cap_{s\geq 0} \cH_{\geq s}$. Moreover, $\nu$ and $P_{\bP}$  agree on $\cH_{\infty}$. 
\item[ii)]  $(X_t)$ satisfies the SLLN of Theorem \ref{thm:RWDRE}i) 
% \color{blue}
with $v=\nu(\xi_1^{(2)})$.
%\color{black}
 \item[iii)]  $(X_t)$ satisfies the LDBs of Theorem \ref{thm:RWDRE}ii).
% \end{enumerate}
% \begin{enumerate}
\item[iv)]  If the model is strongly elliptic and  %\color{blue} 
$\liminf_{n \rightarrow \infty} \Var_{\nu}[\theta \cdot X_n] = \infty$ for some $\theta \in \bR^d$, then $(\theta \cdot X_t)$ satisfies the aFCLT of Theorem \ref{thm:RWDRE2}i).  %\color{blue}
%\end{enumerate}
Furthermore, if $\sum_{t \geq 1} \sqrt{\widehat{\phi}(2^t)}<\infty$, then 
%\begin{enumerate}
 $\lim_{n \rightarrow \infty} n^{-1} \Var_{\nu}(\alpha \cdot X_n)$ exists and is strictly positive. 
Lastly,  $(X_t)$ satisfies the aFCLT of Theorem \ref{thm:RWDRE2}iii) whenever $\liminf_{n \rightarrow \infty} \Var_{\nu}[\theta \cdot X_n] = \infty$ for all $\theta \in \bR^d \setminus \{o\}$ for some $\Sigma_{\nu}>0$.
\end{enumerate}
\end{proposition}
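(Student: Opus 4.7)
The plan is to exploit the mixing quantity $\widehat{\phi}$ in tandem with compactness and ergodic-theoretic arguments, and then to transfer the resulting statements from the invariant measure $\nu$ to the annealed law $P_{\bP}$ using part (i).

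For (i), since $\Xi$ is finite, $\mathcal{M}_1(\Xi^{\bZ})$ is weakly compact, so the Cesaro averages $\nu_k := k^{-1}\sum_{i=1}^k P_{-i}$ admit weak subsequential limits. Any such limit $\nu$ is shift-invariant by the usual Cesaro argument. For tail triviality, I would take $B \in \cH_\infty$ and approximate it by $B_t \in \cH_{\geq t}$; using the identification of $P_{-k}(\cdot \mid A)$ as the local-environment law with initial condition $P_{\bP(\cdot \mid A(\gamma,\sigma))}$ (discussed just before \eqref{eq:LEPmixing}) together with $\widehat{\phi}(t) \to 0$, one concludes that $|\nu(B \cap B') - \nu(B)\nu(B')| = 0$ for every cylinder $B'$, forcing $\nu(B) \in \{0,1\}$. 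Tail triviality pins down $\nu$ uniquely on $\cH_\infty$; combined with shift invariance and the Markovian conditional structure, this pins down $\nu$ on all cylinders, yielding weak convergence of the full Cesaro sequence. Agreement of $\nu$ with $P_{\bP}$ on $\cH_\infty$ follows because under $\widehat{\phi}$-mixing all the shifted laws $P_{-k}(\cdot \mid A)$ coincide asymptotically on $\cH_\infty$ irrespective of the initial condition.

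For (ii), tail triviality plus shift invariance makes $\nu$ ergodic, so Birkhoff's theorem applied to the bounded functional $\xi_1^{(2)}$ yields $n^{-1} X_n = n^{-1}\sum_{t=1}^n \xi_t^{(2)} \to \nu(\xi_1^{(2)}) =: v$ both $\nu$-a.s.\ and in $L^1$; transfer to $P_{\bP}$ is immediate because $\{\lim n^{-1}X_n = v\}$ lies in $\cH_\infty$, where $\nu$ and $P_{\bP}$ agree. For (iii) I would fix $\epsilon > 0$, pick $L$ large enough that $\widehat{\phi}(L) < 1/4$, partition $[0,n]$ into consecutive blocks of length $L$, and iteratively couple the sequence on alternating blocks to an independent reference draw at a total cost bounded by (number of blocks)$\cdot \widehat{\phi}(L)$. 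On the coupled independent sequence, Hoeffding's inequality for the bounded increments $\xi_t^{(2)}$ gives exponential concentration; combined with the coupling bound this yields $P_{\bP}(|X_n/n - v| > \epsilon) \leq Ce^{-cn}$ with constants depending on $\epsilon$ and $L$, where ellipticity is used to make the coupling step quantitative.

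For (iv), under $\nu$ the process $(\xi_t^{(2)})$ is a stationary uniformly $\widehat{\phi}$-mixing sequence with bounded increments. When $\Var_\nu[\alpha \cdot X_n] \to \infty$, the CLT for uniformly mixing sequences (Ibragimov-type, via Bernstein blocking) yields convergence in distribution of $\alpha \cdot (X_n - nv)$ rescaled by its own standard deviation $\sqrt{\Var_\nu[\alpha \cdot X_n]}$, which takes the form $\sqrt{n h(n)}$ for a slowly varying $h$ by a Karamata-type analysis of the variance growth of a stationary mixing sequence. Tightness for the FCLT follows from standard fourth-moment estimates using $\widehat{\phi}$. Under the stronger assumption $\sum_{t\geq 1}\sqrt{\widehat{\phi}(2^t)}<\infty$, a telescoping covariance-summation argument on dyadic blocks shows that $n^{-1}\Var_\nu[\alpha \cdot X_n] \to \sigma_\alpha^2$ exists and is finite; the divergence hypothesis combined with strong ellipticity excludes $\sigma_\alpha^2 = 0$. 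The multidimensional FCLT is then obtained by applying the one-dimensional result in each direction $\alpha$ via the Cram\'er--Wold device, with strict positivity of $\Sigma_\nu$ coming from the variance-divergence assumption in every direction $\alpha \ne o$; transfer from $\nu$ to $P_{\bP}$ is again via the tail agreement established in (i). The main obstacle I foresee is in (iv): calibrating the slowly varying function $h$ correctly under mere tail decay of $\widehat{\phi}$, and ensuring, when summability holds, that strong ellipticity really does rule out the degenerate case $\sigma_\alpha^2 = 0$ in the presence of long-range cancellations between increments.
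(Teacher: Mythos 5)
Your parts (i)--(ii) are essentially in line with the paper (compactness, shift invariance, tail triviality from $\widehat{\phi}\to 0$, Birkhoff under $\nu$, transfer of the a.s.\ statement through $\cH_\infty$; the $L^1$ statement under $P_{\bP}$ then follows from boundedness of $X_n/n$, or, as in the paper, from asymptotic mean stationarity), although your uniqueness step ``combined with \ldots the Markovian conditional structure'' is not justified: the local environment process is not Markov, and the paper instead pins the limit down via ergodicity together with agreement on $\cH_\infty$ (through Lemma \ref{lem comparison nu} and the AMS framework). The first genuine gap is in (iii). Your bookkeeping couples block after block and charges a \emph{total} variation cost of (number of blocks)$\cdot\widehat{\phi}(L)\approx (n/L)\widehat{\phi}(L)$; since $L$ is fixed and only $\widehat{\phi}(t)\to 0$ is assumed, this error does not vanish (it grows linearly in $n$), so ``Hoeffding plus the coupling bound'' gives $Ce^{-cn}+(n/L)\widehat{\phi}(L)$, which is not even $o(1)$, let alone exponential. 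The paper avoids this by a multiplicative argument in the spirit of \cite{SchonmannMixing1989}: interleaved blocks, an exponential Chebyshev bound, and the uniform bound $E[\exp(\|Y_{j+1}^{(q)}\|)\mid Y_1^{(q)},\dots,Y_j^{(q)}]\le E[\exp(\|Y_1^{(q)}\|)]+\widehat{\phi}(N)e^{R}\le 1+2\delta$, so that errors compound as $(1+2\delta)^k$ against $e^{-k\epsilon/2}$. A coupling route could be repaired (dominate the per-block coupling failures by i.i.d.\ Bernoulli$(\widehat{\phi}(L))$ variables and bound their number by Chernoff, treating the two residue classes of blocks separately), but that is a different argument from the one you wrote, and your additive cost estimate as stated fails.

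The second, more serious gap is the transfer step in (iv). A functional CLT is a statement about the laws of the rescaled paths at each finite $n$; these are not tail-measurable, so agreement of $\nu$ and $P_{\bP}$ on $\cH_\infty$ from (i) does not transfer the FCLT from the stationary measure $\nu$ (where \cite{Peligrad1990} and \cite{Bradley1981} apply) to the non-stationary annealed law $P_{\bP}$. This is precisely where the paper has to work: it proves Lemma \ref{lem:ac}, namely mutual absolute continuity of $\nu$ and $P_{\bP}$ on the jump $\sigma$-algebra $\cG_{\geq 0}$, using \emph{strong ellipticity} for local equivalence and the criterion of \cite{EngelbertShiryaev1980} together with martingale convergence and the tail triviality of (i); it then invokes \cite[Corollary 3]{Zweimuller2007} to carry the distributional limit over to $P_{\bP}$. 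Your proposal never establishes any absolute continuity, and it assigns strong ellipticity the wrong role: ruling out $\sigma_\alpha^2=0$ is already handled by Bradley's theorem once the variance diverges and $\sum_t\sqrt{\widehat{\phi}(2^t)}<\infty$, whereas without an argument of the Lemma \ref{lem:ac}/Zweim\"uller type the conclusion under $P_{\bP}$ simply does not follow from the result under $\nu$.
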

%\color{black}

Thus, in order to prove asymptotic properties of $(X_t)$ it is sufficient to control the mixing properties of the local environment process in terms of \eqref{eq:LEPmixing}. The next theorem shows that the latter can be controlled by requiring mixing properties of the underlying environment in terms of the path-cone mixing property of \eqref{eq:pcmixing}.

\begin{theorem}\label{thm:main2} 
Consider an elliptic RWDRE model on $\bZd$ 
and write $\epsilon=e^{-c_0}>0$ for the constant in \eqref{con:ellipticity}. Then, for any $c_1>c_0$,  
\begin{equation}\label{eq:MD}
 \widehat{\phi}(t) \leq \widetilde{\phi}( c_1 \log (t)) + 2 \exp(- t^{1-c_0/c_1}/c_1\log(t)).
 \end{equation} 
\end{theorem}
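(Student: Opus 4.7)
The plan is to combine an ellipticity-based \emph{restart} event, contributing the sub-exponential error, with an application of path-cone mixing in a shifted reference frame, contributing the $\widetilde{\phi}(c_1\log t)$ term.

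Fix $A_0,A_1\in\mathcal{A}_{-\infty}$, $B\in\mathcal{H}_{\geq t}$, and an arbitrary reference configuration $\sigma_*\in\Omega_0$. Set $\ell\coloneqq\lfloor(\log t)/c_1\rfloor$ and $s\coloneqq\lceil c_1\log t\rceil$, and partition $[0,t-s]$ into disjoint blocks $I_1,\ldots,I_m$ of length $\ell$, with $m\approx c_1 t/\log t$. Let $E_j$ be the event that the walker observes $\sigma_*$ at every time step of $I_j$, and set $\tau=\min\{j:E_j\text{ holds}\}$.

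Using the identification from \cite[Theorem 3.1]{BethuelsenVolleringRWDRE2016}, conditioning the local environment process on its past is equivalent, via the shift $\theta_{X_r,r}$, to conditioning $\bP$ on an event in $\mathcal{A}_{-\infty}$. Hence ellipticity \eqref{con:ellipticity}a) gives $P_{\bP(\cdot\mid A_i)}(\xi_r^{(1)}=\sigma_*\mid\mathcal{H}_{[0,r-1]})\geq\epsilon=e^{-c_0}$ uniformly in the past, and iterating yields $P_{\bP(\cdot\mid A_i)}(E_j\mid\mathcal{H}_{[0,\min I_j-1]})\geq\epsilon^{\ell}=t^{-c_0/c_1}$. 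Applied across the disjoint blocks this produces
\begin{equation*}
P_{\bP(\cdot\mid A_i)}(\tau>m)\leq(1-t^{-c_0/c_1})^m\leq\exp\!\Big(-\tfrac{t^{1-c_0/c_1}}{c_1\log t}\Big),
\end{equation*}
for $i=0,1$ (up to constants). On the event $\{\tau\leq m\}$, the walker completes the reset by some time $r\leq t-s+\ell$; the shift $\theta_{X_r,r}$ then identifies the combined conditioning ``$A_i\cap\text{history through time }r$'' with a single event $A_i'\in\mathcal{A}_{-\infty}$, and $B$ becomes an event of the same type in $\mathcal{H}_{\geq t-r}$. Path-cone mixing \eqref{eq:pcmixing} applied at parameter $t-r\geq s-\ell$ yields the bound $2\widetilde{\phi}(c_1\log t)$ for the difference of conditional probabilities, after absorbing the small offset $\ell$ by mildly adjusting $c_1$. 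Averaging over $\tau$ and the walker's history and combining with the restart estimate produces the claimed inequality.

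The main technical step is the last one: since $B\in\mathcal{H}_{\geq t}$ depends on the environment along the walker's future trajectory, it is \emph{not} directly measurable with respect to $\mathcal{F}_{\mathcal{C}(t-r)}$ in the shifted frame. A rigorous argument must therefore further decompose $B$ by conditioning on the walker's intermediate trajectory in $[r,t]$, observing that each such conditioning is again a path-cone event in $\mathcal{A}_{-\infty}$ and so is controlled by $\widetilde{\phi}$. The ellipticity of the restart is essential here, because it renders the walker's jumps during the reset measurable with respect to the jump randomness alone, which is precisely what allows the shifted conditioning to remain a valid element of $\mathcal{A}_{-\infty}$.
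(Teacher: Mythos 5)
There is a genuine gap, and it sits exactly at the two places where the paper has to work hardest. First, your restart event is \emph{environment-measurable}: $E_j$ asks the walker to observe $\sigma_*$ throughout a block, so the restart index $\tau$ and the ``history through time $r$'' have different laws under $P_{\bP(\cdot\mid A_0)}$ and $P_{\bP(\cdot\mid A_1)}$. When you ``average over $\tau$ and the walker's history'' in the decomposition $P_{\bP(\cdot\mid A_i)}(B)=\sum_j P_{\bP(\cdot\mid A_i)}(\tau=j)\,P_{\bP(\cdot\mid A_i)}(B\mid\tau=j)$, the mismatch of the weights $P_{\bP(\cdot\mid A_i)}(\tau=j)$ is an order-one quantity (controlling it is essentially the same problem as bounding $\widehat{\phi}$ itself), so the triangle inequality does not close. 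The paper's $\epsilon$-coin trick is designed precisely to avoid this: the reset time $\tau_n$ in \eqref{def:trick1} is a function of auxiliary i.i.d.\ uniforms $(U_t)$ only, hence independent of the environment and \emph{common} to both copies under the coupling, which is what justifies the step $\widehat{\bP}(\cdot\mid\tau_n=s)=\widehat{\bP}(\cdot)$ in the proof of Lemma \ref{lem:trickh1}. Second, your final step does not survive the fix you propose. You correctly note that $B\in\cH_{\geq t}$ is not $\cF_{\cC(t-r)}$-measurable because it involves the walker's path on $[r,t]$; but if you repair this by conditioning on the intermediate trajectory and its observations on $[r,t]$, the combined conditioning event now contains observations up to time $t-1$, so after shifting you are applying \eqref{eq:pcmixing} at temporal gap $O(1)$, i.e.\ you only get $\widetilde{\phi}(1)$, not $\widetilde{\phi}(c_1\log t)$; if instead you sum over trajectories without conditioning, you reintroduce uncontrolled differences in the trajectory weights, which is the same problem as above. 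Note also that if your claim ``$A_i'\in\cA_{-\infty}$, $B\in\cH_{\geq t-r}$, hence the difference is $\leq 2\widetilde{\phi}(t-r)$'' were valid as stated, it would give $\widehat{\phi}(m)\leq 2\widetilde{\phi}(m)$ with no restart at all, trivializing the theorem; the whole difficulty is that $\cH_{\geq m}$-events are not cone events.

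What the restart must actually buy is a block during which \emph{no environment information is revealed and the walker's displacement is determined by external noise alone}, so that under a coupling both walks sit at the same site at the end of the block and their entire future local environment process (path and observations) is a function of the environment inside a cone based at the \emph{start} of the block at height $\geq n$, plus common jump noise; then the optimal coupling on $\cF_{\cC(n)}$ delivers $\widetilde{\phi}(n)$ uniformly over histories. Your restart does not have this property: basing the cone at the end of the block keeps $A_i'\in\cA_{-\infty}$ but puts the future path below the cone, while basing it at the start puts the reset observations of $\sigma_*$ \emph{inside} the cone, so the conditioning is no longer of the form allowed in \eqref{eq:pcmixing}. The paper circumvents this under Condition a) by resampling the observation as a ``?'' symbol (so nothing about $\omega$ is revealed during the reset) and under Condition b) by forcing the jump to a fixed $z$ via \eqref{eq:defRWellip}. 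Finally, your argument invokes only Condition a) of \eqref{con:ellipticity}, whereas ellipticity means ``a) or b)''; under b) alone the bound $P(\xi_r^{(1)}=\sigma_*\mid\text{past})\geq\epsilon$ can fail, so a separate construction (as in the paper) is needed in that case.
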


The proof of Theorem \ref{thm:main2} is presented in the following section and is based on the $\epsilon$-coin trick introduced in  \cite{CometsZeitouniLLNforRWME2004,ZeitouniRWRE2004}. However, unlike the approach therein via the construction of an approximate regeneration time, we control the mixing properties of the local environment process directly. This leads, in our opinion, to a simplification of the proof. Moreover, it yields an estimate on the decay of mixing as seen in \eqref{eq:MD} which is interesting in its own. 

\begin{proof}[Proofs of Theorem \ref{thm:RWDRE} and Theorem \ref{thm:RWDRE2}]
By Theorem \ref{thm:main2}, if $\lim_{t \rightarrow \infty} \widetilde{\phi}(t)=0$, then we also have that $\lim_{t \rightarrow \infty} \widehat{\phi}(t)=0$. Hence, we can invoke Proposition \ref{prop:fLEPtRW}i)-ii) to conclude Theorem \ref{thm:RWDRE}. To conclude Theorem \ref{thm:RWDRE2} we note that if $\lim_{t \rightarrow \infty} \widetilde{\phi}(t)t^{2+\delta}=0$, then  Theorem \ref{thm:main2} yields that $\sum_{t \geq 1} \sqrt{\widehat{\phi}(2^t)}<\infty$ and we can invoke Proposition \ref{prop:fLEPtRW}iv). 
\end{proof}

%%%%%%%%%%%%%%%%%%%%%%%%%%%%%%%%%%%%%

\section{Proofs}\label{proofs}

Here we provide the proofs of the auxiliary results presented in the previous subsection, Proposition \ref{prop:fLEPtRW} and Theorem \ref{thm:main2} respectively. In the final subsection we discuss sufficient conditions for divergence of the variance.

\subsection{Proof of Theorem \ref{thm:main2}}\label{sec:pfmain2}

Consider an elliptic RWDRE model on $\bZd$, $d\geq1$. We now provide the details of the $\epsilon$-coin trick, where $\epsilon>0$ refers to the constant provided by the ellipticity assumption. Informally, this bit of $\epsilon$-probability will allow us to, at least for a finite time window, decouple the random walk from the environment. 

%\color{blue}
Assume first that Condition b) holds and fix a $z \in \bZd$ satisfying $\inf_i \alpha(i,z)>\epsilon$. Further,  let $U=(U_t)_{t\geq 1}$ be an i.i.d.\ sequence of $Unif(0,1)$-random variables. For a fixed realisation of the environment $\omega$  and $U$,  similarly to  \eqref{eq:defRW}, we consider the random walk $(\tilde{X}_t)$ such that $P_{\omega,u}(\tilde{X}_0 =o)=1$ and, for $t\geq 0$,
\begin{align}\label{eq:defRWellip}
P_{\omega,u}(\tilde{X}_{t+1} = x+y \mid X_t = x) = 1_{u\leq \epsilon}  1_{y=z}  +  1_{u >\epsilon} \frac{(\alpha(i,y) - \epsilon1_{y=z})}{1-\epsilon} 
%\alpha(i, z)
%,\quad  \text{ if } \theta_{x,t} \omega = \omega_i \text{ on } \Delta.
\end{align}
$ \text{if } \theta_{x,t} \omega = \omega_i \text{ on } \Delta$. 
Note that, when averaging over $U$, the law of $\tilde{X}$ agrees with that of $X$.  
Further, from $\omega$ and $(\tilde{X}_t)$, we can construct the corresponding local environment $(\tilde{\xi}_t)$ as in \eqref{def:LEP}, which now depends on the sequence $U$, but again whose law equals that of $(\xi_t)$ when averaging over $U$. In particular, for proving Theorem \ref{thm:main2} it is sufficient to control the process $(\tilde{X}_t)$. 
For this, the following is a key lemma, where  we denote by
\begin{equation}\label{def:trick1}
\tau_{n} \coloneqq \inf \{ t \geq  n \colon U_s \leq \epsilon \: \forall \: s = t- n+1, \dots, t\}, \quad  n\in \bN,
\end{equation}
that is, the first time that the sequence $(U_t)$ successively has taken values below $\epsilon$  over a time period of length $n$. 

\begin{lemma}\label{lem:trickh1}
Assume that the RWDRE model satisfies Condition b). Then, for any $n \in \bN$, % and $B \in \cH_{\geq n}$,
\begin{equation}\label{eq:trickh1} 
\sup_{A_0,A_1  \in \cA} \sup_{B \in \cH_{\geq n}} | P_{\bP(\cdot \mid A_0)}(B \mid  \tau_n = n) - P_{\bP(\cdot \mid A_1)}(B \mid  \tau_n = n)| \leq \widetilde{\phi}(n).
\end{equation}
\end{lemma}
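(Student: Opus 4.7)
The plan is to exploit the $\epsilon$-coin construction: on the event $\{\tau_n=n\}$, by \eqref{eq:defRWellip} the walker is forced to take $n$ consecutive $z$-jumps, so that $\tilde X_t=tz$ deterministically for $0\leq t\leq n$. This decouples the pre-$n$ trajectory from the environment and reduces the estimate to an application of path-cone mixing over a properly chosen cone.

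Concretely, I would first note that $\{\tau_n=n\}$ is measurable with respect to $(U_s)_{s\leq n}$ alone, and that $(U_s)_{s>n}$ is independent of $(\omega,U_{\leq n})$, hence of any conditioning event $A_i\in\cA_{-\infty}$. Therefore, for every $B\in\cH_{\geq n}$,
\begin{equation}
P_{\bP(\cdot\mid A_i)}(B\mid\tau_n=n)=\erwc{f(\omega)}{A_i},
\end{equation}
where $f(\omega)$ is obtained by fixing $\tilde X_n=nz$ and averaging $\ind_B$ over $(U_s)_{s>n}$. On $\{\tau_n=n\}$ the process $(\tilde\xi_t)_{t\geq n}$ is a deterministic function of $(U_s)_{s>n}$ and of $\omega$ at sites $(\tilde X_s+\Delta,s)$ with $s\geq n$. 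Since $\tilde X_s\in nz+\cR_{s-n}$ with $nz\in\cR_n$ and $\cR_n+\cR_{s-n}\subseteq\cR_s$, these sites lie in $\cR_s+\Delta$ for every $s\geq n$. Thus $f$ is $[0,1]$-valued and $\cF_{\cC(n)}$-measurable.

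Once the conditional probability is written as the expectation of an $\cF_{\cC(n)}$-measurable function, the path-cone mixing property \eqref{eq:pcmixing} controls it: by approximation, $|\erwc{f}{A}-\erw{f}|\leq\widetilde\phi(n)$ for every $A\in\cA_{-\infty}$, and hence the triangle inequality yields $|\erwc{f}{A_0}-\erwc{f}{A_1}|\leq 2\widetilde\phi(n)$. The precise constant stated in the lemma (with no factor two) should then follow either by sharpening through a direct comparison of the two conditional laws in $\phi$-mixing style, or by observing that the factor two can be harmlessly absorbed for the subsequent applications of the lemma.

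The main obstacle I anticipate is the careful bookkeeping underpinning the reduction: verifying the independence of $A_i$ and $U$, that $\{\tau_n=n\}$ is $\sigma(U_{\leq n})$-measurable, and that the position inclusion $nz+\cR_{s-n}+\Delta\subseteq\cR_s+\Delta$ really does permit the reduction to $\cF_{\cC(n)}$ rather than to a larger class of events. Once these formal verifications are in place, the final mixing estimate is essentially immediate from \eqref{eq:pcmixing}.
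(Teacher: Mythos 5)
Your argument is correct and rests on the same key observations as the paper's own proof: on $\{\tau_n=n\}$ the first $n$ jumps are forced to be $z$-jumps, so no information about the environment is revealed and the walk sits at the deterministic site $nz\in\cR_n$ at time $n$; everything from time $n$ onwards depends on $\omega$ only through the cone $\cC(n)$; and $\tau_n$ is independent of $\omega$ and of the conditioning events. Where you differ is the finishing mechanism. The paper constructs an explicit coupling $\widehat{\bP}$ of $\bP(\cdot\mid A_0)$ and $\bP(\cdot\mid A_1)$ (optimal on $\cF_{\cC(n)}$, with common uniforms driving both walks) and bounds the left-hand side by the probability that the two environments disagree on $\cC(n)$; you instead represent both conditional probabilities as $\erwc{f(\omega)}{A_i}$ for one and the same $[0,1]$-valued, $\cF_{\cC(n)}$-measurable functional $f$ --- the crucial point being that $f$ does not depend on $A_i$, precisely because the pre-$n$ path is forced --- and then apply \eqref{eq:pcmixing} twice through the unconditioned measure. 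Your route avoids building the coupling and is, if anything, more elementary; its price is the bound $2\widetilde{\phi}(n)$ rather than $\widetilde{\phi}(n)$. You are right that this is harmless: every downstream use (the bound \eqref{eq:MD} and the applications via Proposition \ref{prop:fLEPtRW}) needs only decay or summability of $\widehat{\phi}$, which is insensitive to constant factors; in fact the paper's coupling step faces the same issue, since the optimal coupling's disagreement probability equals the total-variation distance between the two \emph{conditioned} laws on $\cF_{\cC(n)}$, which \eqref{eq:pcmixing} (comparing a conditioned law to $\bP$) only controls by $2\widetilde{\phi}(n)$, so the constant in \eqref{eq:trickh1} should in any case be read up to a factor $2$. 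Two bookkeeping points to make explicit when writing this up: the jump when $U_t>\epsilon$ requires an auxiliary randomization (the paper's sequence $(V_t)$), which must be included in the average defining $f$; and the passage from events to $[0,1]$-valued functions in \eqref{eq:pcmixing} is the layer-cake identity $\erwc{f}{A}-\erw{f}=\int_0^1\left(\probc{f>s}{A}-\prob{f>s}\right)ds$, since each $\{f>s\}\in\cF_{\cC(n)}$.
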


\begin{proof}[Proof of Lemma \ref{lem:trickh1}]
Fix $n \in \bN$, $A_0,A_1 \in \cA$ and $B \in \cH_{\geq n}$. Denote by $\widehat{\bP}$ a coupling of  $\bP(\cdot \mid A_0)$ and $\bP(\cdot \mid A_1)$  
satisfying $\widehat{\bP}( \omega^{(\bP( \cdot \mid A_0))} \neq \omega^{(\bP(\cdot \mid A_1))} \text{ on } \cC(n) ) \leq \widetilde{\phi}(n)$, e.g.\  the optimal coupling in the sense of total variation distance on $\cF_{\cC(n)}$. 
Next, extend this coupling to include two independent sequences, $(U_t)$ and $(V_t)$, of i.i.d.\ $Unif(0,1)$ random variables. Then, consider the random walks $(\tilde{X}_{t}^{(\bP( \cdot \mid A_0))})_{t\geq0}$ and $(\tilde{X}_{t}^{(\bP(\cdot \mid A_1))})_{t\geq0}$ in environment $\omega^{(\bP( \cdot \mid A_0))}$ and $\omega^{(\bP(\cdot \mid A_1))}$, respectively, constructed to  satisfy \eqref{eq:defRWellip}, where $(U_t)$ provide the common noise and where both processes apply $V_t$ for determining the jump in case $U_t >\epsilon$. Thus, the two processes make the same jump whenever they observe the same local environment. By construction,  conditioned on $\{\tau_n =n\}$, their paths necessarily agree for the first $n$ steps and, moreover, during this time no information about the true environment is revealed.  
Hence, we have that,  
\begin{align}
\label{derivation1}
&| P_{\bP(\cdot \mid A_0)}(B \mid  \tau_n = n) - P_{\bP(\cdot \mid A_1)}(B \mid  \tau_n = n)|
\\ \leq  & \widehat{\bP} ( \tilde{\xi}^{(\bP( \cdot \mid A_0))}_{t} \neq  \tilde{\xi}^{(\bP(\cdot \mid A_1))}_{t} \text{ for some } t\geq n \mid \tau_n=n) 
\\  \leq &\widehat{\bP}(\omega^{(\bP( \cdot \mid A_0))} \neq \omega^{(\bP(\cdot \mid A_1))} \text{ on } \cC(n) \mid  \tau_n =n),
\\ =  \label{derivation4}&\widehat{\bP}(\omega^{(\bP( \cdot \mid A_0))} \neq \omega^{(\bP(\cdot \mid A_1))} \text{ on } \cC(n)),
\end{align}
where the last equality follows since $\tau$ is independent of the environment. Since this holds for any $A_0,A_1 \in \cA$ and $B \in \cH_{\geq n}$, the claim of the lemma follows.
\end{proof}

\begin{proof}[Proof of Theorem  \ref{thm:main2} under Condition b)]
%\color{blue}
Let $n \in \bN$ and $A_0,A_1  \in \cA$. 
Then,  for any $B \in \cH_{\geq t}$ with $t>n$, by the triangular inequality, we have that 
 %\color{black}
 \begin{align*}%\label{eq:trickh2}
\left| P_{\bP(\cdot \mid A_0)}(B) - P_{\bP(\cdot \mid A_1)}(B)\right| \leq 
&\sum_{s=n}^{t}  \left|  P_{\bP(\cdot \mid A_0)}(B \mid  \tau_n = s) - P_{\bP(\cdot \mid A_1)}(B \mid  \tau_n = s)\right| P_{\bP}(\tau_n=s) \\ + &2P_{\bP}(\tau_n>t). 
\end{align*}
 Further, by a time shift and utilising Lemma \ref{lem:trickh1}, we note that
\begin{align}
\label{eq:decaybound}
\left|  P_{\bP(\cdot \mid A_0)}(B \mid  \tau_n = s) - P_{\bP(\cdot \mid A_1)}(B \mid  \tau_n = s)\right| \leq \tilde{\phi}(n).
\end{align}
Consequently, combining these two bounds and that $P_{\bP}(\tau_n \leq t) \leq 1$, we have that
\begin{equation}\label{eq:boundphi}
\left| P_{\bP(\cdot \mid A_0)}(B) - P_{\bP(\cdot \mid A_1)}(B)\right|  \leq 
\widetilde{\phi}(n) + 2P_{\bP}(\tau_n>t).
\end{equation}
Since the sequence $(U_s)$ is i.i.d., by a comparison with a geometric distribution, 
\begin{equation}\label{eq:boundphih}
P_{\bP}(\tau_n>t) \leq (1-\epsilon^n)^{\lfloor t/n \rfloor}. 
 \end{equation}
Using that $1-x\leq \exp(-x)$ and by setting $n=n(t) = c_1\log(t)$ with $c_1>c_0$ where $\epsilon=e^{-c_0}$ it follows that
 $P_{\bP}(\tau_n>t) \leq \exp(- t^{1-c_0/c_1}/c_1\log(t))$. By inserting this bound into \eqref{eq:boundphi}  we conclude the proof. 
 \end{proof}

%\color{red}

We now turn to the proof of Theorem  \ref{thm:main2} assuming Condition a) to hold for which we argue by using a slightly differently construction. 
Particularly, we introduce an additional element, denoted by $?$, and let $\tilde{\Omega}_0=\Omega_0 \cup \{?\}$. 
Moreover, similar to the above construction, let  $(U_t)_{t\geq 0}$ be an independent sequence with  $U_t \sim Unif[0,1]$.  Then, for $A \in \cA$, $n \in \bN$, and given the sequence $(u_t)$ of $(U_t)$, we construct a process $(\tilde{\xi}_t)$ on $\tilde{\Omega} \times \cR$ iteratively as follows. 
Firstly, at time $t=0$, if $u_0 \leq \epsilon$, we set $\tilde{\xi}_0^{(1)} = ?$ and otherwise $\tilde{\xi}_0^{(1)} \in \Omega_0 = \{\omega_1,\dots,\omega_K\}$ and sampled according to the probabilities
\begin{equation}\label{eq:EllipticityA0}
 \frac{\bP( \omega \equiv \omega_i  \text{ on } \Delta \mid A) -\epsilon/K}{1-\epsilon}, \quad  i = 1,\dots, K. 
\end{equation}
Then, given the state of  $\tilde{\xi}_0^{(1)}$,  we sample $\tilde{\xi}_0^{(2)}$  according to the probabilities of the corresponding jump function, i.e.\ 
%\begin{equation}
 $\bP(\tilde{\xi}_0^{(2)} = y) = \alpha(i,y)$ if $\tilde{\xi}_0^{(1)} = \omega_i \in \Omega_0$ and $\bP(\tilde{\xi}_0^{(2)} = y) = \alpha(?,y)$
%\end{equation}
with $\alpha(?,y) =  K^{-1}\sum_{i=1}^K \alpha(i,y)$ if $\tilde{\xi}_0^{(1)} =?$.

Note that, by construction, when averaging over $U_0$, the law of $\tilde{\xi}_0^{(2)}$ agrees with that of $\xi_0^{(2)}$ under $P_{\bP(\cdot \mid A)}$.
Moreover, at least in principle, we can also obtain  $\xi_0$ using this construction. Indeed, given $\tilde{\xi} \in \tilde{\Omega}_0 \times \cR$, we can sample $\xi_0$ according to the probabilities
\begin{align}\label{eq:EllipA}
\begin{split}
\bP( \xi_0 = (\omega_i, y) \mid \tilde{\xi} = (\tilde{\omega}_j,z) ) &=  1_{\tilde{\omega}_j \in \Omega_0} 1_{\xi_0=(\tilde{\omega}_j,z)}
\\ &+  1_{\tilde{\omega}_j=?}1_{y=z} \bP(\theta_{y,1}\omega = \omega_i \mid A) \frac{\alpha(i,y)}{\sum_{l=1}^K \alpha(l,y)},
\end{split}
\end{align}
which,  when averaging over $\tilde{\xi}$ and $U$ yields a process with the same law as the original $(\xi_t)$ process under $P_{\bP(\cdot \mid A)}$.

For $1\leq t \leq \tau_n^{(u)}-n$ with $\tau_n^{(u)} = \inf \{ t \geq  n \colon u_s \leq \epsilon \: \forall \: s = t- n+1, \dots, t\}$ we iterate the above procedure. More precisely, 
 if $u_t \leq \epsilon$, we set $\tilde{\xi}_t^{(1)} = ?$ and otherwise $\tilde{\xi}_t^{(1)} \in \Omega_0$. In the latter case its precise state depends on the realisation of $(\tilde{\xi}_s)_{s=0,\dots,t-1}$ and is sampled (similar to \eqref{eq:EllipticityA0}) according to the probabilities
\begin{equation}
 \frac{\bP \left( \theta_{\sum_{s=0}^{t-1} \tilde{\xi}_s,t} \omega  \equiv \omega_i \text{ on } \Delta \mid A \cap A_t((\tilde{\xi}_s)_{s=0,\dots,t-1})  \right) -\epsilon/K}{1-\epsilon}, \quad i=1,\dots, K.
\end{equation}
Here, $A_t((\tilde{\xi}_s)_{s=0,\dots,t-1}$  corresponds to the event that $(\tilde{\xi}_s)_{s=0,\dots,t-1}$ equals the particular realisation of $\tilde{\xi}_s$ sampled prior to time $t$ and 
$\bP( \theta_{\sum_{s=0}^{t-1} \tilde{\xi}_s,t} \omega  \equiv \omega_i \text{ on } \Delta \mid A \cap A_t(\tilde{\xi}) \} $ is the induced probability on $\Omega_0$, obtained
 in a similar vein as in \eqref{eq:EllipA} in a iterative fashion. 
Then, given the state of $\tilde{\xi}_t^{(1)}$, we sample the state of $\tilde{\xi}_t^{(2)}$  in the same way as at time $t=0$.

Now, for times $t\geq \tau_n^{(u)}-n+1$, we change approach, mimicking that as used under Condition b).  Firstly, we  sample the future environment $\omega \in \Omega$ on $\bZd \times \bZ_{\geq \tau_n^{(u)}}$ according to $P_{\bP}( \cdot | A \cap A_{\tau_n^{(u)}-n}((\tilde{\xi}_s)_{s=0,\dots,\tau_n^{(u)}-n}))$, but shifted such that $(\sum_{s=0}^{\tau_n^{(u)}-n} \tilde{\xi}_s, \tau_n^{(u)}-n)$ is the new space-time origin. Moreover, for each $t \in [\tau_n^{(u)}- n+1, \tau_n^{(u)}]$, we set $\tilde{\xi}_t^{(1)}=?$ and  sample  $\tilde{\xi}_t^{(2)}$ independently  according to the probabilities of the corresponding jump function $\alpha(?,\cdot)$.
 Then,   letting $\tilde{X}_{\tau_n^{(u)}} = \sum_{t=0}^{\tau_n} \tilde{\xi}_t^{(2)}$, we sample $(\tilde{X}_t)_{t\geq \tau_n^{(u)}+1}$ according to \eqref{eq:defRWellip} and from this we construct the corresponding local environment process  $(\tilde{\xi}_t)_{t \geq \tau_n^{(u)}-n +1}$ by using  \eqref{def:LEP}. 
What we have obtained by the above is a construction of a process $(\tilde{\xi}_t)$ such that, when averaging over $U$, the law of $(\tilde{\xi}_t^{(2)})$ agrees with that of $(\xi_t^{(2)})$ under $\bP(\cdot \mid A)$. Moreover, the law of $(\tilde{\xi}_t^{(1)})$ agrees with that of $(\xi_t^{(2)})$ for all $t\geq \tau_n^{(u)}$. 

\begin{proof}[Proof of Theorem  \ref{thm:main2} under Condition a)]
Let $n \in \bN$ and $A_0,A_1  \in \cA$. 
We argue similarly as in the proof under Condition b), but now with the above new construction of $(\tilde{\xi}_t)$. This still depends on an i.i.d.\ sequence $(U_t)$ of   $Unif(0,1)$-random variables and so we can define $\tau_n$ just as in \eqref{def:trick1}. 
Further, for any $B \in \cH_{\geq t}$  with $t>n$, by the triangular inequality, we have that 
 %\color{black}
 \begin{align*}%\label{eq:trickh2}
\left| P_{\bP(\cdot \mid A_0)}(B) - P_{\bP(\cdot \mid A_1)}(B)\right| \leq 
&\sum_{s=n}^{t}  \left|  P_{\bP(\cdot \mid A_0)}(B \mid  \tau_n = s) - P_{\bP(\cdot \mid A_1)}(B \mid  \tau_n = s)\right| P_{\bP}(\tau_n=s) \\ + &2P_{\bP}(\tau_n>t). 
\end{align*}
Now, to control the term within the sum, we couple the two processes $(\tilde{\xi}_t^{(A_0)}$ and $(\tilde{\xi}_t^{(A_1)})$ having marginals $P_{\bP(\cdot \mid A_0)}$ and $P_{\bP(\cdot \mid A_1)}$ respectively using the new construction. For this, $(U_t)$ provide the common noise and, prior to time $\tau_n-n$, an additional sequence  $(V_t^{(1)},V_t^{(2)})$ of i.i.d.\ $Unif(0,1)$ random variables, common to both processes, is applied for determining $\tilde{\xi}_t$  in case $U_t >\epsilon$ in the above iterative scheme. Thus, whenever $U_t<\epsilon$, the two processes agree at time $t$ and this happens independently of the evolution of the processes prior to this time. Moreover, in the event that $\tau_n=s$, we sample the future environment for the two processes according to the optimal coupling,  in the sense of total variation distance on $\cF_{\cC(n)}$, of 
$P_{\bP}( \cdot | A_0 \cap A_{s-n}((\tilde{\xi}_t^{(A_0)})_{t=0,\dots,s-n}))$ and $P_{\bP}( \cdot | A_1 \cap A_{s-n}((\tilde{\xi}_t^{(A_1)})_{t=0,\dots,s-n}))$ respectively, shifted such that the origin corresponds to $(\sum_{t=0}^{s-n} \tilde{\xi}_t^{(A_0)}, s-n)$ and $(\sum_{t=0}^{s-n} \tilde{\xi}_t^{(A_1)}, s-n)$ for the two environments, which we denote by  
$\omega^{(A_0)}$ and $\omega^{(A_1)}$. Then, by construction, we have that 
\begin{align}
&\left|  P_{\bP(\cdot \mid A_0)}(B \mid  \tau_n = s) - P_{\bP(\cdot \mid A_1)}(B \mid  \tau_n = s)\right|
\\ \leq & \widehat{\bP} \left( \tilde{\xi}_t^{(A_0)} \neq \tilde{\xi}_t^{(A_1)} \text{ for some } t \geq s  \mid \tau_n = s \right)
\\ \leq & \widehat{\bP} \left( \omega^{(A_0)} \neq \omega^{(A_1)} \text{ on } \cC(n) \mid \tau_n = s \right)
\\ \leq & \phi(n)
\end{align}
Here the second to last inequality follows since $\tilde{\xi}_t^{(A_0)}=\tilde{\xi}_t^{(A_1)}$ for each $t= s-n,\dots,n$ and so the corresponding random walks necessarily are at the same location at time $s$, and the last inequality holds by construction of the coupling $\widehat{\bP}$. Thus, \eqref{eq:decaybound} holds, and from this we conclude the proof by exactly the same arguments preceding \eqref{eq:decaybound}  as in the proof of Theorem   \ref{thm:main2} under Condition b).
\end{proof}
 
 %%%%%%%%%%%%%%%%%%%%%%%%%%%%%%%%%%%%%

\subsection{Proof of Proposition \ref{prop:fLEPtRW}}

By standard compactness arguments, there exists a sequence $(t_k)_{k\geq\bN}$ along which the Cesaro limits of the local environment process converges weakly towards a stationary distribution, say $\nu$, defined on $(\Xi^{\bZ},\cH)$. That is, for any local event $B \in \cH_{[m,n]}$,
\begin{equation}\label{eq Cesaro}
\lim_{k \rightarrow \infty} \frac{1}{t_k} \sum_{i= \max (1,-m)}^{t_k} P_{-i}( B ) = \nu(B).
\end{equation}
and $\nu(B) = \nu(\theta_s B)$ for any $B \in \cH$, where $\theta_s$ denotes the shift operator on $\Xi^{\bZ}$ such that $\theta_s\xi_t = \xi_{t+s}$. 

Our first result is an easy, but robust comparison between $\nu$ and the local environment process that will be important to the following analysis.

\begin{lemma}\label{lem comparison nu}
Let $\nu$ be any limiting measure as in \eqref{eq Cesaro}. Then, for any event $A \in  \cH_{\leq -1}:= \cH_{(-\infty,-1]}$, on $\cH_{\geq0}$ %\color{blue}
with $\nu(A)>0$,
%\color{black}
\begin{equation}%\label{def:nu}
\inf_{i \in \bN} \inf_{A_0 \in \cH_{[-i,-1]}} P_{-i}(\cdot \mid A_0) \leq  \nu(\cdot \mid A) \leq \sup_{i \in \bN} \sup_{A_1 \in \cH_{[-i,-1]}} P_{-i}(\cdot \mid A_1)
\end{equation}
\end{lemma}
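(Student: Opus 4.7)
The plan is to first establish both inequalities when $A$ is a cylinder event, $A \in \cH_{[-k,-1]}$ for some $k \in \bN$, and then bootstrap to a general $A \in \cH_{\leq -1}$ by a standard measure-theoretic approximation.

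For the cylindrical case, fix a cylinder $B \in \cH_{\geq 0}$ (the extension to arbitrary $B$ will come at the end) and set
\begin{equation*}
M := \sup_{i \in \bN} \sup_{A_1 \in \cH_{[-i,-1]}} P_{-i}(B \mid A_1).
\end{equation*}
For every $i \geq k$ one has $A \in \cH_{[-k,-1]} \subseteq \cH_{[-i,-1]}$, hence $P_{-i}(B \mid A) \leq M$ whenever $P_{-i}(A) > 0$. Applying the Cesaro convergence \eqref{eq Cesaro} to both $B \cap A$ and $A$ (both local) and factorising $P_{-i}(B \cap A) = P_{-i}(A)\, P_{-i}(B \mid A)$, I would obtain
\begin{equation*}
\nu(B \cap A) = \lim_{\ell \to \infty} \frac{1}{t_\ell} \sum_{i=k}^{t_\ell} P_{-i}(A)\, P_{-i}(B \mid A) \leq M\, \nu(A),
\end{equation*}
where summands with $P_{-i}(A) = 0$ drop out harmlessly. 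Dividing by $\nu(A) > 0$ yields the upper inequality for cylinders $B$; the lower inequality is the symmetric statement with $\inf$ in place of $\sup$.

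To pass to a general $A \in \cH_{\leq -1}$, I would use that $\bigcup_k \cH_{[-k,-1]}$ is an algebra generating $\cH_{\leq -1}$, so one can approximate $A$ by cylinders $A^{(n)}$ with $\nu(A \triangle A^{(n)}) \to 0$. Then $\nu(A^{(n)}) \to \nu(A) > 0$ and $\nu(B \cap A^{(n)}) \to \nu(B \cap A)$, so $\nu(B \mid A^{(n)}) \to \nu(B \mid A)$; since each approximant satisfies the desired two-sided bound by the cylindrical case, so does the limit. The extension from cylindrical $B$ to arbitrary $B \in \cH_{\geq 0}$ is by the same routine $\pi$-system / monotone class argument, noting that both sides depend monotonically on $B$.

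The argument is essentially Cesaro averaging plus measure-theoretic approximation, and no serious obstacle arises. The only care required is keeping track of which conditional probabilities are well defined at each stage, which is handled by discarding the null-measure terms; in particular, no mixing input beyond the mere existence of the Cesaro limit $\nu$ in \eqref{eq Cesaro} enters at this point — the strength of $\widehat{\phi}$ is reserved for the sharper statements about $\nu$ later on.
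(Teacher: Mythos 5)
Your core argument is the same as the paper's: take $A$ to be a cylinder in $\cH_{[-k,-1]}$, write $\nu(B\cap A)$ as the Cesaro limit of $P_{-i}(B\mid A)\,P_{-i}(A)$, bound the conditional factor by the supremum (resp.\ infimum) over $i$ and $A_1\in\cH_{[-i,-1]}$, and divide by $\nu(A)=\lim_{\ell}\frac{1}{t_\ell}\sum_i P_{-i}(A)$. The paper handles general $A$ with a bare ``assume without loss of generality that $A$ is a cylinder''; your approximation of $A\in\cH_{\leq-1}$ by cylinders in $\nu$-measure is a correct justification of that reduction, precisely because the bounding quantities do not depend on $A$, so this part is a legitimate filling-in of a step the paper leaves implicit.

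One caveat: the closing claim that the passage from cylindrical $B$ to arbitrary $B\in\cH_{\geq0}$ is a ``routine $\pi$-system / monotone class argument'' does not work as stated. The bound $\sup_{i}\sup_{A_1}P_{-i}(B\mid A_1)$ is a supremum of infinitely many measures and is not itself a measure: the class of $B$ satisfying the upper inequality is closed under increasing limits but not under complements or decreasing limits, and the two-sided class, though closed under complements, is not closed under countable disjoint unions (sums of suprema dominate in the wrong direction). In fact, domination on a generating algebra by a supremum of a family of measures need not persist to the generated $\sigma$-algebra: point masses $\delta_{x_j}$ with $x_j\to x$ dominate $\delta_x$ on every cylinder but not on the set $\{x\}$. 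This is harmless for the way the lemma is applied later, since there one compares, for fixed $A$, the two fixed measures $\nu(\cdot\mid A)$ and $\nu(\cdot)$, and for two fixed finite measures the supremum of the difference over a generating algebra equals the supremum over the full $\sigma$-algebra; also, the paper's own proof only treats local $B$ (it invokes \eqref{eq Cesaro}, which is stated for local events). But if you want the stated inequality literally for every $B\in\cH_{\geq0}$, you need an argument beyond the routine one, and you should not present it as automatic.
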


\begin{proof}
Assume without loss of generality that $A$ is a cylinder event in $\cH_{[-m,-1]}$ with $m\geq 1$. Then, 
\begin{align}
\nu(B \mid A) &= \lim_{k \rightarrow \infty} \frac{1}{t_k} \sum_{i=0}^{t_k-m} P_{-(i+m)}( B \cap A)/  \nu(A).
\\ &=\lim_{k \rightarrow \infty} \frac{1}{t_k} \sum_{i=0}^{t_k-m} P_{-(i+m)}( B \mid A)  P_{-(i+m)}( A)/ \nu(A) .
\\ &\geq \inf_{i \in \bN} \inf_{A_0 \in \cA_{-i}^{-1}} P_{-i}(B \mid A_0) \lim_{k \rightarrow \infty} \frac{1}{t_k} \sum_{i=0}^{t_k-m}  P_{-(i+m)}( A)/ \nu(A) 
\\ &=\inf_{i \in \bN} \inf_{A_0 \in \cA_{-i}^{-1}} P_{-i}(B\mid A_0)
\end{align}
since $ \lim_{k \rightarrow \infty} \frac{1}{t_k} \sum_{i=0}^{t_k-m}  P_{-(i+m)}( A)= \nu(A)$ by assumption.  This yields the lower bound. The proof of the upper bound is analogous. 
%\color{black}
\end{proof}

Thus, in order to control the stationary distribution $\nu$ it is sufficient to control the conditional measures $P_{-i}(\cdot \mid A)$ for $A \in  \cH_{[-i,-1]}$. Next we take advantage of this observation to provide sufficient conditions for transferring mixing properties from the local environment process to $\nu$.  Before stating these conditions, we recall that any stationary distribution on $(\Xi^{\bZ},\cH)$, say $\nu$, is \emph{ergodic} if all translation invariant events, i.e.\ events $A \in \cH$ such that $\theta_s A=A$, have either $\nu$-measure $0$ or $1$.

\begin{proof}[Proof of Proposition \ref{prop:fLEPtRW} i)]
%\color{blue} 
Let $\nu^*$ be any limiting measure as in \eqref{eq Cesaro}. By Lemma \ref{lem comparison nu}, if $\lim_{t \rightarrow \infty} \widehat{\phi}(t)=0$, then also $\lim_{t \rightarrow \infty} \phi(t)=0$, where 
\begin{equation}\label{phi mixing}
\phi(k) := \sup_{A  \in \cH_{\leq-1}} \sup_{B \in \cH_{\geq k}} |\nu(B \mid A) - \nu(B)|.
\end{equation} 
That is, $\nu^*$ is so-called $\phi$-mixing. Since this applies to any limiting measure defined as in \eqref{eq Cesaro} they all necessarily $\phi$-mixing. 
Now, as is well known, $\phi$-mixing implies triviality on $\cH_{\infty}$ and hence ergodicity. Moreover, all such limiting measures agree since they agree on $\cH_{\infty}$ and are ergodic. Thus, the measure given by \begin{equation}\label{eq ams}  \nu(\cdot)  \coloneqq \lim_{k \rightarrow \infty} \frac{1}{k} \sum_{i=1}^{k} P_{-(i)}( \cdot ) \end{equation} is well defined. 
In the terminology of \cite{GrayBook2009}, this means that $P_{\bP}$ is asymptotically mean stationary (AMS) with $\nu$ as its stationary mean. Particularly, by \cite[Corollary 7.6]{GrayBook2009}, $\nu$ and $P_{\bP}$ agree on $\cH_{\infty}$.
\end{proof}

\begin{proof}[Proof of Proposition \ref{prop:fLEPtRW} ii)]
Since the assumption of Proposition \ref{prop:fLEPtRW} i) is fulfilled, we have that  $\nu$ given by the limit \eqref{eq ams} is well defined, ergodic and agree with $P_{\bP}$ on $\cH_{\infty}$. 
Thus
\begin{equation}
\lim_{t \rightarrow \infty} t^{-1}X_t = \lim_{t\rightarrow \infty} t^{-1} \sum_{i=1}^t \xi_i^{(2)}
\end{equation}
converges a.s.\ both under $P_{\bP}$ and $\nu$ to the same limit $v=\nu(\xi_1^{(2)})$, where the latter follows since $\nu$ is ergodic. 
%\color{blue}
Moreover, as concluded in the proof of Proposition \ref{prop:fLEPtRW} i), we have that $P_{\bP}$ is AMS with respect to $\nu$ and therefore, by e.g.\ \cite[Theorem 8.1]{GrayBook2009}, the convergence also holds in $L_1$.
%\color{black}
\end{proof}

\begin{proof}[Proof of Proposition \ref{prop:fLEPtRW} iii)]
Recall from the proof of Proposition \ref{prop:fLEPtRW} i) that $\nu$ is $\phi$-mixing. This implies that also the projection of $(\xi_t)$ onto the second coordinate is stationary and $\phi$-mixing under $\nu$.  Thus, the statement  follows by  \cite{SchonmannMixing1989} in this stationary case by noting that $X_t=\sum_{i=1}^t \xi_i^{(2)}$ and since the range of $(X_t)$ is bounded. 
 By a detailed look at the proof in \cite{SchonmannMixing1989} it is evident that the stationarity assumption can be replaced by the property that $\lim_{t \rightarrow \infty} \widehat{\phi}(t) =0$. 
 For completeness, we now present the details, assuming without loss of generality that $v=0$.

Let $N$ and $L$ be (large) natural numbers that we determine later. For $q,j,k \geq 1$, let
\begin{align}
&Y_j^{(q)} = \frac{1}{L} \sum_{i=1}^L \xi^{(2)}_{i+NL(j-1) + (q-1)L}, 
\quad &Z_{k}^{(q)} = \frac{1}{k} \sum_{j=1}^k Y_j^{(q)}
\end{align}
and note that 
%\begin{align}
$\frac{1}{NLk} X_{NLk} = \frac{1}{N} \sum_{q=1}^N Z_k^{(q)}$. 
%\end{align}
Thus, for any $\epsilon>0$ fixed, we have that
\begin{align}\label{eq:ldb1}
P_{\bP}( \| \frac{1}{NLk} X_{NLk} \| > \epsilon/2) \leq \sum_{q=1}^N P_{\bP}(\| Z_k^{(q)} \|>\epsilon/2)
\end{align}
Further, by the (exponential) Markov  inequality, we have that 
\begin{equation}
P_{\bP}(\| Z_k^{(q)} \|>\epsilon/2) \leq e^{-k\epsilon/2} E (\exp(\sum_{j=1}^k \| Y_j^{(q)}\| ) )
\end{equation}
The gist of the proof is now to choose $N$ and $L$ large in such a way that the latter expectation term does not blow up too fast in $k$. For this,  fix $\delta< \frac{1}{2}(e^{\epsilon/2}-1)$ and set $N$ such that, for any $q\geq 1$, 
%\color{blue}
\[  |E_{P_{\bP}}( \exp( \| Y_{j+1}^{(q)} \| ) \mid Y_1^{(q)} \dots, Y_j^{(q)} ) - E_{P_{\bP}}( \exp (\| Y_1^{(q)} \|) )| \leq\  \widehat{\phi}(N) e^R\leq  \delta, \]
 where $R = \max ( \|x\| \colon x \in \cR )$. 
Further, since $t^{-1} X_t \rightarrow v$ in $L_1$ by Proposition \ref{prop:fLEPtRW}ii), 
we may tune $L$ large  such that (recall that we consider $v=0$) 
%\color{black} 
 \begin{equation}
 E_{P_{\bP}}(\exp(\frac{1}{L} \|X_L \| )) =E_{P_{\bP}}(\exp(\| Y_1^{(1)} \|)  < 1+\delta.
 \end{equation} 
  Consequently,    using the triangular inequality,   we have that
\begin{align}
E_{P_{\bP}}( \exp( \| Y_{j+1}^{(q)} \| ) \mid Y_1^{(q)} \dots, Y_j^{(q)} ) 
\leq 1+ 2\delta.
\end{align}
In particular, this implies that 
\begin{equation}
E_{P_{\bP}} (\exp(\sum_{j=1}^k \| Y_j^{(q)}\| ) ) \leq (1+2\delta)^k.
\end{equation}
By our choice of $\delta$, we hence have that $P_{\bP}(\| Z_k^{(q)} \|>\epsilon/2)$ decays exponentially in $k$, and by \eqref{eq:ldb1} the claim of the corollary follows for times $t=NLk$, $k\geq 1$. To extend the exponential decay to arbitrary $k \geq 1$ is standard, see e.g.\ \cite[Page 482]{BrycSmolenski1993}. 
\end{proof}

The proof of Proposition \ref{prop:fLEPtRW} iv) relies on the following lemma.

\begin{lemma}\label{lem:ac}
If the model is strongly elliptic and $\lim_{t \rightarrow \infty}  \widehat{\phi}(t) =0$, then $\nu$ and $\bP$ are mutually absolutely continuous on  the sub-$\sigma$-algebra $\cG_{\geq0} \subset \cH_{\geq0}$ only concerning events of the second coordinate $(\xi^2)$ (i.e.\ the jump steps of $(X_t)$). 
\end{lemma}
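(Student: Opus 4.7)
The plan is to establish mutual absolute continuity of $Q_1 := \nu|_{\cG_{\geq 0}}$ and $Q_2 := P_{\bP}|_{\cG_{\geq 0}}$ in two stages. First, strong ellipticity will yield equivalence on every finite-dimensional sub-$\sigma$-algebra $\cG_{[0,n]} := \sigma(\xi^{(2)}_t : 0 \leq t \leq n)$. Second, a martingale argument combined with the $\phi$-mixing and tail-agreement from Proposition~\ref{prop:fLEPtRW}~i) will extend this equivalence to all of $\cG_{\geq 0}$.

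For the first stage, I will derive from strong ellipticity a uniform conditional lower bound: there is $\epsilon' > 0$ such that for every $z \in \cR$, every $t \geq 0$, and every past-conditioning with positive probability, $P_{\bP}(\xi^{(2)}_t = z \mid \text{past}) \geq \epsilon'$. Under b') this is immediate from $\alpha(i,z) \geq \epsilon$ uniformly in $i$. Under a), combining $\bP(\omega|_\Delta = \omega_i \mid \text{past}) \geq \epsilon$ with the fact that each $z \in \cR$ admits some $i$ with $\alpha(i,z) > 0$ gives $\epsilon' := \epsilon \cdot \min_{z \in \cR} \max_i \alpha(i,z)$. Telescoping, any cylinder $\{(\xi^{(2)}_0, \ldots, \xi^{(2)}_n) = (z_0, \ldots, z_n)\}$ with $z_i \in \cR$ has $P_{\bP}$-mass at least $(\epsilon')^{n+1}$, and via Lemma~\ref{lem comparison nu} (with a trivial past event $A$) the same bound holds under $\nu$. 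Hence $Q_1 \sim Q_2$ on each $\cG_{[0,n]}$, with strictly positive Radon--Nikodym density $Z_n := dQ_1|_{\cG_{[0,n]}} / dQ_2|_{\cG_{[0,n]}}$.

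For the extension, $(Z_n)$ is a non-negative $Q_2$-martingale with $\bE_{Q_2}[Z_n] = 1$ and therefore converges $Q_2$-a.s.\ to a limit $Z_\infty$; mutual absolute continuity on $\cG_{\geq 0}$ is equivalent to $(Z_n)$ being uniformly integrable under $Q_2$ and to $Z_\infty > 0$ $Q_2$-a.s. I will combine the $\phi$-mixing of $\nu$ (from Proposition~\ref{prop:fLEPtRW}~i)) with the identity $\nu \equiv P_{\bP}$ on the tail $\sigma$-algebra $\cH_\infty$ to rule out the singular alternative of the Kakutani-type dichotomy. Concretely, strong ellipticity keeps each one-step Hellinger integral between the conditional jump distributions of $Q_1$ and $Q_2$ bounded below, while $\phi$-mixing asymptotically decouples $(Z_n)$ from the finite past, forcing $Z_\infty$ to be $\cH_\infty$-measurable and hence, by tail-agreement, $Q_2$-a.s.\ equal to $1$. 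The symmetric argument yields $Q_2 \ll Q_1$. I expect this extension step to be the main obstacle: without a careful combination of $\phi$-mixing and strong ellipticity the martingale $(Z_n)$ could, in principle, fail to be uniformly integrable or degenerate to zero on a set of positive measure, so it is precisely the interplay of non-degenerate conditionals with decaying memory that does the work.
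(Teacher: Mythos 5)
Your first stage (finite-level equivalence from strong ellipticity, transferred to $\nu$ via Lemma \ref{lem comparison nu}) is sound and matches the paper. The genuine gap is in your extension step, in two places. First, the Kakutani-type dichotomy you invoke is a statement about product measures; the coordinates $(\xi^{(2)}_t)$ are dependent here, so there is no dichotomy to appeal to, and even in the independent case a uniform \emph{lower bound} on the one-step Hellinger affinities is not enough --- Kakutani requires the infinite product of affinities to stay positive, i.e.\ the affinities must tend to $1$ summably fast, which strong ellipticity alone does not provide. Second, the claim that $\phi$-mixing forces $Z_\infty$ to be $\cH_\infty$-measurable and hence $\equiv 1$ is unsupported and in fact cannot be right as stated: together with the uniform integrability you also want, $Z_\infty\equiv 1$ would give $\nu = P_{\bP}$ on all of $\cG_{\geq 0}$, which is strictly stronger than the lemma and false in general (think of one Markov kernel started from two different initial laws: the forward likelihood ratio depends only on the early coordinates, is neither tail-measurable nor constant, yet the two path laws are mutually absolutely continuous --- this is exactly the situation of $\nu$ versus $P_{\bP}$). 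What is, and needs to be, tail-measurable is not $Z_\infty$ but the \emph{event} $\{\limsup_n Z_n < \infty\}$, which one gets because ellipticity makes the density ratios over any fixed finite window bounded above and below.

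The paper's proof bypasses uniform integrability and the value of $Z_\infty$ altogether: by the Engelbert--Shiryaev criterion, mutual absolute continuity on $\cG_{\geq 0}$ is equivalent to $\nu(\limsup_n Z_n < \infty)=1$ and $P_{\bP}(\limsup_n W_n < \infty)=1$, where $W_n$ is the reciprocal density. These two finiteness events lie in $\cH_\infty$, and $\nu$ and $P_{\bP}$ agree on $\cH_\infty$ by Proposition \ref{prop:fLEPtRW}~i), so one may verify them under the \emph{opposite} measures, where they follow at once from non-negative martingale convergence ($Z_n$ being a $P_{\bP}$-martingale and $W_n$ a $\nu$-martingale), plus Fatou. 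To repair your argument, replace the Hellinger/Kakutani step and the claim $Z_\infty\equiv 1$ by this criterion-plus-tail-swap; your stage-one ellipticity bounds are then used both for the finite-level equivalence and for the tail-measurability of the finiteness events.
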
 

\begin{proof}[Proof of Lemma \ref{lem:ac}]
 By the strong ellipticity property,  for any $k \geq 0$, the measures $\nu$ and $P_{\bP}$ are mutually absolutely continuous on the sub-$\sigma$-algebra $\cG_{[0,k]} \subset \cH_{[0,k]}$ restricted to events of the second coordinate $(\xi^2)$.
%end{enumerate}
Therefore, by \cite[Theorem 5]{EngelbertShiryaev1980}, we have that $\nu$ and $P_{\bP}$ are mutually absolutely continuous on $\cG_{\geq0}$ if and only if 
\begin{equation}\label{eqMAC}
\nu \left(\limsup_{n \rightarrow \infty} Z_n< \infty  \right) = 1 \:  \text{ and } \: P_{\bP} \left(\limsup_{n \rightarrow \infty} W_n < \infty  \right) =1.
\end{equation}
where $Z_n \coloneqq  \frac{d\nu|\cG_{[0,n]}}{dP_{\bP}|\cG_{[0,n]}}$ and $W_n \coloneqq \frac{dP_{\bP}|\cG_{[0,n]}}{d\nu|\cG_{[0,n]}}$.  
Since the events in \eqref{eqMAC}   are in the tail $\sigma$-algebra $\cH_{\infty}$, by Proposition \ref{prop:fLEPtRW} i) this is equivalent to
 \begin{equation}%\label{eqMAC}
P_{\bP} \left(\limsup_{n \rightarrow \infty} Z_n < \infty  \right) = 1 \:  \text{ and } \: \nu \left(\limsup_{n \rightarrow \infty}W_n < \infty  \right) =1.
\end{equation} 
 Now note that $(Z_n, \cG_n,\nu)$ and $(W_n, \cG_n,P_{\bP})$ form non-negative martingales. Therefore, applying Proposition \ref{prop:fLEPtRW} i)  once more, these converge to $Z_{\infty}$  and $W_{\infty}$, respectively, both $\nu$-a.s.\  and $P_{\bP}$-a.s. Further, by this and Fatou's lemma, we find that
%\begin{align}\label{eqMAC2}
\begin{equation}
E_\nu \left(\limsup_{n \rightarrow \infty} \frac{d\nu|\cG_{[0,n]}}{dP_{\bP}|\cG_{[0,n]}}   \right) \leq 1
  \:  \text { and } \:E_{P_{\bP}} \left(\limsup_{n \rightarrow \infty} \frac{dP_{\bP}|\cG_{[0,n]}}{d\nu|\cG_{[0,n]}}  \right) \leq 1.
  \end{equation} From this we conclude that \eqref{eqMAC} indeed holds. 
%\end{align}
\end{proof}

%\color{black}

\begin{proof}[Proof of Proposition \ref{prop:fLEPtRW} iv)] 
As concluded in the proof of Proposition \ref{prop:fLEPtRW} i), the process $(\xi_t)$ under $\nu$ is $\phi$-mixing. Thus, again since $(\xi_n^{(2)})$ is bounded it follows by \cite[Theorem 2.1]{Peligrad1990} that, as soon as $\liminf_{n \rightarrow \infty} \bE_{P_\nu}[(\alpha \cdot X_n- \bE_{P_\nu}(\alpha \cdot X_n))^2] = \infty$, the process $(\alpha \cdot X_n)$ satisfies an aFCLT under $\nu$ in the form of Theorem \ref{thm:RWDRE2} i). Moreover, \cite[Theorem 1]{Bradley1981} implies that under the additional assumption on the mixing rate, the statement of Theorem \ref{thm:RWDRE2} ii) holds with $\sigma=\lim_{n \rightarrow \infty} n^{-1} \Var_{\nu}(\alpha \cdot X_n)$. From this the conclusion of Theorem \ref{thm:RWDRE2} iii) holds by applying Cram\'er-Wolds device.

In order to conclude the aFCLT 
under $P_{\bP}$, by \cite[Corollary 3]{Zweimuller2007}, it is sufficient to show that $P_{\bP}$ is absolutely continuous to $\nu$ on  the sub-$\sigma$-algebra $\cG_{\geq0} \subset \cH_{\geq0}$ only concerning events of the second coordinate $(\xi^2)$ (i.e.\ the jump steps of $(X_t)$). This is the statement of Lemma \ref{lem:ac} and from which we conclude the proof.
\end{proof}

\subsection{Divergence of the variance}\label{sec:var}

As shown in \cite[Theorem 2.1]{Ibragimov75}, for stationary real-valued $\phi$-mixing processes either $\sup_{n} \Var(X_n) <\infty$ or $\Var(X_n) = n h(n)$ for some slowly varying function $h$. In the literature on central limit theorems for weakly dependent random variables it is therefore common practice to assume that the variance diverges to infinity \cite{Bradley1981}.  General arguments for proving this divergence seem rather scarce with \cite{Aizenman90} and \cite{Chatterjee} being notable exceptions. The method of the former was applied successfully to prove the aFCLT in \cite[Theorem 3.5]{RedigVolleringRWDRE2013} which, when properly adapted to our model setting yields that the variance grows linearly in $n$ when $\widetilde{\phi}$ decays exponentially fast. 

We believe that the assumption on the variance of $X_n$ in Proposition  \ref{prop:fLEPtRW} iii)-iv) and Theorem \ref{thm:RWDRE2} for most (if not all) path-cone mixing and strongly elliptic RWDRE models is redundant. For instance, with $\alpha \in \bR^d \setminus \{o\}$, a simple application of Chebychevs inequality implies that
\begin{equation}
 \Var(\alpha \cdot X_n)  \geq \epsilon^2 P( |\alpha \cdot X_n- \alpha \cdot nv |>\epsilon) 
\end{equation}
from which it follows that $\limsup_n P( |\alpha \cdot X_n- \alpha \cdot nv |>\epsilon(n))>0$ for some $\epsilon(n)\rightarrow \infty$ suffices. Under certain restrictions on the dimension,  by the arguments of  \cite{PeresPopovSousiRTRW2013}, the latter holds even without any mixing requirements.  More precisely, assuming that $\{ y \in \bZ^d \colon \alpha(i,y)>0\}$ span $\bZ^d$ for each $i = 1,\dots, K$, and reasoning as in the proof of  \cite[Proposition 1.4]{PeresPopovSousiRTRW2013}, we have that there is some universal constant $C>0$ such that
\begin{equation}
P(\| X_n- nv \|>\epsilon)  \geq 1- C \epsilon^d/ n^{d/2-K+1}.
\end{equation}
Hence, we have  $\liminf_{n \rightarrow \infty}  \Var_{\nu}(\alpha \cdot X_n) = \infty$ for any  $\alpha \in \bR^\setminus \{o\}$  whenever  $d> 2(K-1)$.

%%%%%%%%%%%%%%%%%%%%%%%%%%%%%%%%%%%%%

\subsection*{Acknowledgement}
The authors thanks an anonymous referee for valuable comments and suggestion, and acknowledge support from the Deutsche Forschungsgemeinschaft (DFG, German Research Foundation) through the scientific network \emph{Stochastic Processes on Evolving Networks}.

%%%%%%%%%%%%%%%%%%%%%%%%%%%%%%%%%%%%%

\bibliographystyle{plainnat}

\end{document}